
\documentclass[]{interact}

\usepackage{epstopdf}
\usepackage[caption=false]{subfig}

\usepackage[numbers,sort&compress]{natbib}
\bibpunct[, ]{[}{]}{,}{n}{,}{,}
\usepackage{amsmath}
\usepackage{amssymb}
\usepackage[mathscr]{euscript}
\usepackage{enumerate}
\usepackage{xspace}
\usepackage{color}
\usepackage{enumerate}
\usepackage{enumitem}
\usepackage{amsthm}

\theoremstyle{plain}
\newtheorem{theorem}{Theorem}[section]
\newtheorem{lemma}[theorem]{Lemma}
\newtheorem{corollary}[theorem]{Corollary}
\newtheorem{proposition}[theorem]{Proposition}

\theoremstyle{definition}
\newtheorem{definition}[theorem]{Definition}
\newtheorem{example}[theorem]{Example}

\theoremstyle{remark}
\newtheorem{remark}{Remark}[section]

\newtheorem*{comment}{Comment}
\newtheorem{condition}{Condition}
  
 \newcommand{\of}{[\hspace{-0.06cm}[}
 \newcommand{\gs}{]\hspace{-0.06cm}]}
 \newcommand{\dd}{d}
 \newcommand{\cadlag}{c\`adl\`ag }
 \newcommand{\1}{\mathbf{1}}
 \newcommand{\lle}{\langle\hspace{-0.085cm}\langle}
 \newcommand{\rre}{\rangle\hspace{-0.085cm}\rangle}

\begin{document}


\title{On the Existence of Semimartingales with Continuous Characteristics}

\author{
\name{David Criens\textsuperscript{a}\thanks{CONTACT D. Criens. Email: \texttt{david.criens@tum.de}}}
	\affil{\textsuperscript{a} Center for Mathematics, Technical University of Munich, Munich, Germany}
}

\maketitle

\begin{abstract}
We prove the existence of quasi-left continuous semimartingales with continuous local semimartingale characteristics which satisfy a Lyapunov-type or a linear growth condition, where latter takes the whole history of the paths into consideration. The proof is based on an approximation and a tightness argument and the martingale problem method. 
\end{abstract}

\begin{keywords}
Existence of Semimartinges; Lyapunov Condition; Linear Growth Condition; Martingale Problem
\end{keywords}


\section{Introduction}
Existence theorems for solutions to stochastic equations are of fundamental interest in many areas of probability theory. In the context of \emph{weak solutions} to stochastic differential  equations (SDEs) important contributions were made by Skorokhod and by Stroock and Varadhan. Skorokhod (see \cite{skorokhod2014studies}) showed that SDEs with continuous coefficients of linear growth have weak solutions. Stroock and Varadhan (see \cite{SV}) introduced the concept of the \emph{martingale problem}, which is nowadays one of the most important tools for studying existence, uniqueness and limit theorems for stochastic processes.
In many of the classical monographs on stochastic analysis (e.g., \cite{KaraShre,RY}) Skorokhod's existence theorem is proven by the martingale problem argument of Stroock and Varadhan.
The main idea is to construct an approximation sequence of probability measures on a path space, to show its tightness and finally to use the martingale problem method to verify that any of its accumulation points is the law of a weak solution.

In case of SDEs with Wiener noise and coefficients of linear growth, tightness can be verified via Kolmogorov's tightness criterion. 
Gatarek and Goldys \cite{doi:10.1080/17442509408833868} proposed a more direct argument for tightness based on the compactness of a  fractional operator and the factorization method of Da Prato, Kwapien and Zabczyk \cite{doi:10.1080/17442508708833480}. 
This method was used by Hofmanov\'a and Seidler \cite{doi:10.1080/07362994.2013.799025} to replace the linear growth assumption in Skorokhod's theorem by a Lyapunov-type condition. 

Skorokhod's original theorem is not restricted to path continuous settings. For general semimartingales Jacod and M\'emin \cite{PSMIR_1979___1_A4_0} proved conditions for tightness in terms of the so-called \emph{semimartingale characteristics}. These criteria were used by Jacod and M\'emin \cite{doi:10.1080/17442508108833169} to prove continuity and uniform boundedness conditions for the existence of weak solutions to SDEs driven by general semimartingales. 

Refinements of the tightness criteria from \cite{PSMIR_1979___1_A4_0} are proved in the monograph \cite{JS} of Jacod and Shiryaev. The conditions are used to prove a Skorokhod-type existence result for semimartingales. More precisely, Jacod and Shiryaev consider a candidate for semimartingale characteristics on the Skorokhod space and formulate continuity and uniform boundedness conditions which imply the existence of a probability measure for which the coordinate process is a semimartingle with the candidate as semimartingale characteristics. 

In this article we generalize the existence result of Jacod and Shiryaev for the quasi-left continuous case by replacing the uniform boundedness assumption by local boundedness assumptions together with a Lyapunov-type or a linear growth condition. The linear growth condition takes the whole history of the paths into consideration. We prove the result as follows: First, we construct an approximation sequence with the help of the existence result of Jacod and Shiryaev. Second, we show tightness by a localization of a criterion from \cite{JS} together with a Lyapunov-type or a Gronwall-type argument. In this step we also adapt arguments used by Liptser and Shiryaev \cite{liptser1989theory}. Finally, we use arguments based on the martingale problem for semimartingales to verify that any accumulation point of our approximation sequence is the law of a semimartingale with the correct semimartingale characteristics.

Let us shortly comment on continuative problems. The weak convergence argument heavily relies on the continuous mapping theorem, which is applicable when the coefficients have a continuity property. It is only natural to ask what can be said for discontinuous coefficients. We do not touch this topic in the present article and refer the curious reader to the recent articles \cite{IMKELLER2016703,kuhn18} where interesting progress in this direction is made. 

The article is structured as follows. In Section \ref{sec: 2.1} we explain the mathematical setting of the article. In Section \ref{sec: 2.2} we state our main results. In particular, we discuss its assumptions. Finally, we comment on the method based on the extension of local solutions and on a possible expansion of our result via Girsanov-type arguments. In Section \ref{sec: 2.3} we apply our results in a jump-diffusion setting. The proofs of our main results are given in Section \ref{sec:p}. 

The topic of this article is of course very classical and the basic definitions can be found in many textbooks. Our main reference is the monograph of Jacod and Shiryaev \cite{JS}. As far as possible we will refer to results in this monograph. Furthermore, all non-explained terminology can also be found there.

\section{Formulation of the Main Results}\label{sec: main}
\subsection{The Mathematical Setting}\label{sec: 2.1}
Let \(\Omega\) be the Skorokhod space of \cadlag functions \(\mathbb{R}_+ \to \mathbb{R}^d\) equipped with the Skorokhod topology (see \cite{JS} for details). We denote the coordinate process on \(\Omega\) by \(X\), i.e. \(X_t(\omega) = \omega(t)\) for \(t \in \mathbb{R}_+\) and \(\omega \in \Omega\). Let \(\mathcal{F} \triangleq \sigma(X_t, t \in \mathbb{R}_+)\) and 
\(
\mathcal{F}_t \triangleq \bigcap_{s > t} \mathcal{F}^o_s, 
\)
where \(\mathcal{F}^o_s \triangleq \sigma(X_t, t \in [0, s])\). Except stated otherwise, when we use terms such as \emph{adapted, predictable, etc.} we refer to the right-continuous filtration \((\mathcal{F}_t)_{t \geq 0}\).

Throughout the article we fix a continuous truncation function \(h \colon \mathbb{R}^d \to \mathbb{R}^d\), i.e. a bounded continuous function which equals the identity around the origin.

A \cadlag \(\mathbb{R}^d\)-valued adapted process \(Y\) is called a semimartingale if it admits a decomposition 
\(
Y = Y_0 + M + V,
\)
where \(M\) is a \cadlag local martingale starting at the origin and \(V\) is a \cadlag adapted process of finite variation starting at the origin. Here, we adapt the terminology from \cite{JS} and call a process \(V\) of finite variation if for all \(\omega \in \Omega\) the map \(t \mapsto V_t(\omega)\) is locally of finite variation.
To a semimartingale \(Y\) we associate a quadruple \((b, c, K; A)\) consisting of an \(\mathbb{R}^d\)-valued predictable process \(b\), a predictable process \(c\) taking values in the set \(\mathbb{S}^d\) of symmetric non-negative definite \(d\times  d\) matrices, a predictable kernel \(K\) from \(\Omega \times \mathbb{R}_+\) into \(\mathbb{R}^ d\) and a predictable increasing \cadlag process \(A\), see \cite[Definition II.2.6, Proposition II.2.9, II.2.12 -- II.2.14]{JS} for precise definitions and properties.
When \((B, C, \nu)\) are the semimartingale characteristics of \(Y\) (see \cite[Definition II.2.6]{JS}), then 
\[
\frac{\dd B_t}{\dd A_t} = b_t, \qquad \frac{\dd C_t}{\dd A_t} = c_t, \qquad \frac{\nu(\dd t, \dd x)}{\dd A_t} = K_t(\dd x), 
\]
i.e. in other words \((b, c, K)\) are the densities of \((B, C, \nu)\) w.r.t. the reference measure \(\dd A_t\). Thus, we call the quadruple \((b, c,  K; A)\) the \emph{local characteristics of \(Y\)}.
Providing an intuition, \(b\) represents the drift and depends on the  truncation function \(h\), \(c\) encodes the continuous local martingale component and \(K\) reflects the jump structure. 
In addition, for \(i, j = 1, \dots, d\) we define by
\[
\widetilde{c}^{ij} \triangleq c^{ij}  + \int h^i(x) h^j(x) K(\dd x) - \Delta A \int h^i(x) K(\dd x) \int h^j(x) K(\dd x) 
\]
a \emph{modified second characteristic}, see \cite[Proposition II.2.17]{JS}.

Let us shortly comment on the role played by the initial law. 
For SDEs with Wiener noise it was proven by Kallenberg \cite{10.2307/2244838} that weak solutions exist for all initial laws if, and only if, weak solutions exist for all degenerated initial laws. Although the result is fairly old, it seems not to be commonly known.
We now state a version for a general semimartingale setting. The proof is similar as in the diffusion case and can be found in Appendix \ref{sec:p2}.
\begin{proposition}\label{prop: ex}
	Assume that for all \(z \in \mathbb{R}^d\) there exists a probability measure \(P_z\) on \((\Omega, \mathcal{F})\) such that the coordinate process is a \(P_z\)-semimartingale with local characteristics \((b, c, K; A)\) and initial law \(\delta_z\). Then, for any Borel probability measure \(\eta\) on \(\mathbb{R}^d\) there exists a probability measure \(P_\eta\) on \((\Omega, \mathcal{F})\) such that the coordinate process is a \(P_\eta\)-semimartingale with local characteristics \((b, c, K; A)\) and initial law \(\eta\). 
\end{proposition}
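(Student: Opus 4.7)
\medskip
\noindent\textbf{Proof plan.} The natural approach is to define $P_\eta$ as a mixture of the hypothesised degenerate-initial-law solutions,
\[
P_\eta(F) \triangleq \int_{\mathbb{R}^d} P_z(F)\, \eta(\dd z), \qquad F \in \mathcal{F}.
\]
Two non-trivial issues must be addressed: the map $z \mapsto P_z$ must be sufficiently measurable for the integral to make sense, and one must verify that the resulting mixture again admits local characteristics $(b,c,K;A)$. The hypothesis provides, for each $z \in \mathbb{R}^d$, \emph{some} solution $P_z$, but not in a measurable way, so a selection argument is required.

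\medskip
\noindent\textbf{Step 1: Measurable selection.} Let $\mathcal{P}(\Omega)$ denote the Polish space of Borel probability measures on the Skorokhod space, endowed with the weak topology. Consider
\[
\mathcal{A} \triangleq \bigl\{(z, Q) \in \mathbb{R}^d \times \mathcal{P}(\Omega) : Q \circ X_0^{-1} = \delta_z,\ X \text{ is a } Q\text{-semimartingale with local char.\ } (b,c,K;A)\bigr\}.
\]
Using the martingale characterisation of semimartingale characteristics (cf.\ \cite[Chapter II]{JS}) together with the purely $X$-measurable common localising sequence $T_n \triangleq \inf\{t : |X_t| \vee |X_{t-}| \vee A_t > n\} \wedge n$, membership in $\mathcal{A}$ can be expressed as a countable conjunction of Borel conditions of the form $E_Q[(N^g_{t\wedge T_n} - N^g_{s \wedge T_n})\1_F]=0$, where $g$ runs over a countable family of test functions, $s<t$ over rationals, and $F$ over a countable generator of $\mathcal{F}^o_s$. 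Hence $\mathcal{A}$ is Borel in $\mathbb{R}^d \times \mathcal{P}(\Omega)$, and by hypothesis its $\mathbb{R}^d$-projection equals all of $\mathbb{R}^d$. The Jankov--von Neumann selection theorem then produces a universally measurable map $z \mapsto P_z$ with $(z,P_z) \in \mathcal{A}$ for all $z \in \mathbb{R}^d$.

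\medskip
\noindent\textbf{Step 2: Identification of the mixture.} Universal measurability of $z \mapsto P_z(F)$ suffices for $\eta$-integrability of the defining formula, so $P_\eta$ is a well-defined probability measure on $(\Omega, \mathcal{F})$. The initial law is immediate: $P_\eta(X_0 \in B) = \int \1_B(z)\, \eta(\dd z) = \eta(B)$. To identify the characteristics, fix a test function $g$ and the associated process $N^g$ (built from $(b,c,K,A)$); the stopped process $N^g_{\cdot \wedge T_n}$ is bounded and a true $P_z$-martingale for every $z$, so Fubini yields
\[
E_{P_\eta}\bigl[(N^g_{t \wedge T_n} - N^g_{s\wedge T_n}) H_s\bigr] = \int E_{P_z}\bigl[(N^g_{t\wedge T_n} - N^g_{s\wedge T_n}) H_s\bigr]\, \eta(\dd z) = 0
\]
for all bounded $\mathcal{F}^o_s$-measurable $H_s$ and $s < t$. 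Right-continuity of $(\mathcal{F}_t)$ extends this to $\mathcal{F}_s$-measurable $H_s$, so $N^g$ is a $P_\eta$-local martingale with localising sequence $(T_n)$; letting $g$ vary identifies the $P_\eta$-characteristics as $(b,c,K;A)$. The delicate part of the argument is the Borel structure of $\mathcal{A}$ in Step 1, which rests on a careful countable description of the local-characteristic property as functionals of $Q$ that are Borel-measurable in the weak topology on $\mathcal{P}(\Omega)$; once this is in place, the rest is a routine Fubini computation.
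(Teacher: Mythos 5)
Your proposal takes essentially the same approach as the paper: reduce the problem to a measurable selection over the Borel set of solutions with degenerate initial law, form the mixture $P_\eta = \int P_z\,\eta(\dd z)$, and verify the local characteristics via a Fubini computation on the martingale conditions. The paper structures the selection differently (it shows the set $\mathcal{P}$ of all degenerate-initial-law solutions is Borel in $\mathscr{P}$, observes that $P\mapsto\Phi(P)=$ starting point is continuous, and applies a section theorem to the Borel graph of $\Phi$, obtaining a Borel selection $\eta$-a.e.), whereas you apply Jankov--von Neumann directly to $\mathcal{A}\subset\mathbb{R}^d\times\mathcal{P}(\Omega)$ to get a universally measurable selection; either gives adequate measurability for the mixture.

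However, there is a genuine gap in your choice of localising sequence. You take $T_n = \inf\{t: |X_t|\vee |X_{t-}| \vee A_t > n\}\wedge n$, which caps only $X$ and $A$. The processes that must be shown to be local martingales, e.g.\ $M^i(h) = X^i(h) - \int_0^\cdot b^i_s\,\dd A_s - X^i_0$ and the quadratic compensator $M^i(h)M^j(h)-\int_0^\cdot\widetilde{c}^{ij}_s\,\dd A_s$, contain integrals of $b$, $\widetilde c$, $K$ against $\dd A$. Proposition \ref{prop: ex} makes no local boundedness assumption on $(b,c,K)$ (Condition \ref{cond: basic}(i) is not part of its hypothesis), so bounding $|X|$ and $A$ does not bound $\int_0^{T_n}\|b_s\|\,\dd A_s$ or $\int_0^{T_n}\|\widetilde c_s\|\,\dd A_s$ uniformly in $\omega$. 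Consequently $N^g_{\cdot\wedge T_n}$ need not be bounded, so (a) the conditions $E_Q[(N^g_{t\wedge T_n}-N^g_{s\wedge T_n})\1_F]=0$ need not be well-defined Borel functionals of $Q$, and (b) the Fubini step $\int E_{P_z}[\dots]\,\eta(\dd z)$ requires an integrability estimate you have not supplied. The paper avoids this by localising each test process $Y$ with its own first-exit time $\tau^Y_n = \inf\{t: |Y_{t-}|\vee|Y_t|\ge n\}$; combined with the explicit bounds on $\Delta Y$ coming from the boundedness of $h$ and $g$, this makes each $Y_{\cdot\wedge\tau^Y_n}$ bounded by a deterministic constant, which is exactly what is needed to make the martingale conditions Borel and the mixture identification a routine Fubini argument. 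Replacing $T_n$ by the paper's $\tau^Y_n$ repairs the gap without altering the rest of your argument.
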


From now on we fix a deterministic continuous increasing function \(A \colon \mathbb{R}_+ \to \mathbb{R}_+\) with \(A_0 = 0\) and a Borel probability measure \(\eta\) on \(\mathbb{R}^d\).
Next, we define a so-called \emph{candidate triplet} \((b, c, K)\) on \((\Omega, \mathcal{F})\). Let us shortly clarify some notations: For \(x, y \in \mathbb{R}^d\) we write \(\|x\|\) for the Euclidean norm, \(\langle x, y\rangle\) for the Euclidean scalar product, and for \(M \in \mathbb{S}^d\) we write \(\|M\| \triangleq \textup{trace } M\). 
\begin{definition}
	We call \((b, c, K)\) a candidate triplet, if it consists of the following:
	\begin{enumerate}
		\item[\textup{(i)}] A predictable \(\mathbb{R}^d\)-valued process \(b\) such that \(\int_0^t \|b_s(\omega)\| \dd A_s < \infty\) for all \((t,\omega) \in \mathbb{R}_+ \times \Omega\).
		\item[\textup{(ii)}]  A predictable \(\mathbb{S}^d\)-valued process \(c\) such that \(\int_0^t \|c_s(\omega)\| \dd A_s < \infty\) for all \((t,\omega) \in \mathbb{R}_+ \times \Omega\).
		\item[\textup{(iii)}]  A predictable kernel \((\omega, s) \mapsto K_s(\omega;\dd x)\) from \(\Omega \times \mathbb{R}_+\) into \(\mathbb{R}^d\) such that for all \((t,\omega) \in \mathbb{R}_+ \times \Omega\) we have \(K_{t}(\omega; \{0\}) = 0\) and \(\int_0^t \int (1  \wedge \|x\|^2) K_{s}(\omega; \dd x) \dd A_s < \infty\).
	\end{enumerate}
\end{definition}
In the following we fix also a candidate triplet \((b, c, K)\). 
The goal is to find a probability measure \(P\) on \((\Omega, \mathcal{F})\) such that the coordinate process \(X\) is a \(P\)-semimartingale with local characteristics \((b, c, K; A)\) and initial law \(\eta\).
\subsection{Existence Conditions for Semimartingales}\label{sec: 2.2}

Let \(C_2(\mathbb{R}^d)\) be the set of all continuous bounded function \(\mathbb{R}^d \to \mathbb{R}\) which vanish around the origin. 
Moreover, let \(C_1(\mathbb{R}^d)\) be a subclass of the non-negative functions in \(C_2(\mathbb{R}^d)\) which contains all functions \(g (x) = (a \|x\| - 1)^+ \wedge 1\) for \(a \in \mathbb{Q}\) and is convergence determining for the weak convergence induced by \(C_2(\mathbb{R}^d)\) (see \cite[p. 395]{JS} for more details).

For a twice continuously differentiable function \(f \colon \mathbb{R}^d \to  \mathbb{R}\) and \(a > 0\) we set 
\begin{align*}
\widetilde{c}^{ij, a} &\triangleq c^{ij} + \int_{\|x\| \leq a} x^i x^j  K(\dd x),\quad b^a \triangleq b - \int \big(h(x) - x \1 \{\|x\| \leq a\}\big)K(\dd x)
\end{align*}
and for all \((t, \omega) \in \mathbb{R}_+ \times \Omega\) and \(x \in \mathbb{R}^d\) we set
\begin{align*}
(\mathcal{K}^d_a f)(\omega; t, x) &\triangleq f(\omega(t-) + x) - f(\omega(t-)) - \sum_{k =  1}^d  \partial_{k} f(\omega(t-)) x^k, 
\\
(\mathcal{K}^l_a f)(\omega; t) &\triangleq \sum_{k = 1}^d \partial_{k} f(\omega(t-)) b^{k, a}_t (\omega) + \frac{1}{2} \sum_{k, j = 1}^d \partial^2_{kj} f (\omega(t-)) c^{kj}_t  (\omega),
\end{align*}
and
\begin{align*}
(\mathcal{L}_a f) (\omega; t) \triangleq 
(\mathcal{K}^l_a f)(\omega; t) + \int_{\|x\| \leq a} (\mathcal{K}^d_a f)(\omega; t, x)  K_t(\omega; \dd x),
\end{align*}
provided the last term is well-defined.
We note that Taylor's theorem yields that for all \((t, \omega) \in \mathbb{R}_+ \times \Omega\) there exists a constant \(c = c(f, a, t, \omega)\) such that 
\begin{equation}\label{eq: Tay bdd}
\begin{split}
\int_0^t \int_{\|x\| \leq a} \big|( \mathcal{K}^d_a f)(\omega; s, x) \big|  K_s(\omega; \dd x) \dd A_s \leq c \int_0^ t \int_{\|x\| \leq a} \|x\|^2 K_s(\omega; \dd x) \dd A_s < \infty.
\end{split}
\end{equation}
For \(m > 0\) we define
\[
\Theta_m \triangleq \Big\{ (t, \omega) \in [0, m] \times \Omega  \colon \sup_{s \in [0, t]} \|\omega(s-)\| \leq m \Big\}.
\]
\begin{condition}\label{cond: basic}
		\begin{enumerate}
		\item[\textup{(i)}] \emph{Local majoration property of \((b, c, K)\):} For all \(m > 0\) it holds that 
		\begin{align*} 
		\sup_{(t, \omega) \in \Theta_m} \Big(\|b_t (\omega)\| + \|c_t(\omega)\| + \int \big(1\wedge \|x\|^2\big) K_t(\omega; \dd x)\Big) < \infty.
		\end{align*}
		\item[\textup{(ii)}] \emph{Skorokhod continuity property of \((b, c, K)\):} 
		For all \(\alpha \in \Omega\) each of the maps
		\[
		\omega \mapsto b_t(\omega), \widetilde{c}_t(\omega), \int g(x) K_t(\omega;  \dd x),\quad g \in C_1(\mathbb{R}^d), 
		\] 
		is continuous at \(\alpha\) for \(\dd A_t\)-a.a. \(t \in \mathbb{R}_+\).
		\item[\textup{(iii)}] \emph{Local uniform continuity property of \((b, c, K)\):} For all \(t \in \mathbb{R}_+, g \in C_1(\mathbb{R}^d), i, j = 1, \dots, d\) and all Skorokhod compact sets \(K \subset \Omega\) each \(k \in \{\omega \mapsto b^i_t(\omega),  \widetilde{c}^{ij}_t (\omega), \int g(x) K_t(\omega; \dd x)\}\) is uniformly continuous on \(K\), equipped with the local uniform topology, i.e. for all \(\varepsilon > 0\) there exists a \(\delta = \delta(\varepsilon) > 0\) such that for all \(\omega, \alpha \in K\)
		\[
		\sup_{s \in [0, t]} \|\omega(s) - \alpha(s)\| < \delta \quad \Rightarrow \quad |k(\omega) - k(\alpha)| < \varepsilon.
		\]
		\end{enumerate}
\end{condition}
\begin{condition}\label{cond: gl bjp}
	\emph{Big jump property of \(K\):} 
	For all \(m > 0\) we have
	\[
	\lim_{a \nearrow \infty} \sup_{t \in [0, m]} \sup_{\omega \in \Omega} K_t(\omega; \{x \in \mathbb{R}^d \colon \|x\| > a\}) = 0.
	\]
\end{condition}
\begin{condition}\label{cond: Ly1}
	\emph{Lyapunov condition I:} 
	There exists a \(\theta \in \mathbb{R}_+\) such that for all \(a \in (\theta, \infty)\) there exist Borel functions \(V_a \colon \mathbb{R}^d \to (0, \infty), \gamma_a \colon \mathbb{R}_+ \to \mathbb{R}_+\) and \(\beta_a \colon \mathbb{R}_+ \to \mathbb{R}_+\) with the following properties:
	\begin{enumerate} 
		\item[\textup{(a)}] \(V_a \in C^{2}(\mathbb{R}^d)\). 
		\item[\textup{(b)}] \(\int_0^t \gamma_a (s)\dd A_s < \infty\) for all \(t \in \mathbb{R}_+\).
		\item[\textup{(c)}] \(\beta_a\) is increasing and
		\(
		\lim_{n \to \infty} \beta_a(n) = \infty.
		\)
		\item[\textup{(d)}] For all \((t, \omega)\in \mathbb{R}_+ \times \Omega\) we have \(V_a(\omega(t)) \geq \beta_a(\|\omega(t)\|)\)
		and
		\begin{align*}
		\int_0^t \1 \{\gamma_a(s) V_a(\omega(s-)) < (\mathcal{L}_a  V)(\omega; s)\} \dd A_s = 0.
		\end{align*}
	\end{enumerate}
\end{condition}
\begin{condition}\label{cond: LG1}
	 \emph{Linear growth condition I:} There exists a \(\theta \in \mathbb{R}_+\) such that for all \(a \in (\theta, \infty)\) there exists a Borel function \(\gamma_a \colon \mathbb{R}_+ \to \mathbb{R}_+\) 
	such that \(\int_0^t \gamma_a(s) \dd A_s < \infty\) for all \(t \in \mathbb{R}_+\) and for all \(\omega \in \Omega\) and
	for \(\dd A_t\)-a.a. \(t \in \mathbb{R}_+\)
	\begin{align}\label{eq: LG}
	\|b^a_t(\omega)\|^2 + \|\widetilde{c}^a_t(\omega)\| &\leq \gamma_a (t) \Big(1 + \sup_{s \in [0, t]} \|\omega(s -)\|^2\Big).
	\end{align}
\end{condition}
The first main result of this article is the following:
\begin{theorem}\label{theo:1}
	Assume that the Conditions \ref{cond: basic} and \ref{cond: gl bjp} hold and that one of the Conditions \ref{cond: Ly1} and \ref{cond: LG1} holds.
	Then, there exists a probability measure \(P\) on \((\Omega, \mathcal{F})\) such that the coordinate process \(X\) is a \(P\)-semimartingale with local characteristics \((b, c, K; A)\) and initial law \(\eta\).
\end{theorem}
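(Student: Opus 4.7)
My plan is to follow the roadmap the author sketches in the introduction, combining an approximation argument, a tightness step driven by the Lyapunov or linear growth hypothesis, and a martingale-problem identification of the limit. By Proposition \ref{prop: ex} I may (and will) restrict to the case $\eta = \delta_z$ for arbitrary but fixed $z \in \mathbb{R}^d$, which removes the initial-law technicality and lets me work with a concrete starting point throughout.

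The first step is to build an approximating sequence of characteristics that fit into the uniformly bounded Skorokhod--Jacod--Shiryaev framework. For $n \in \mathbb{N}$ I would set
\[
b^n_t(\omega) \triangleq b_t(\omega)\, \1_{\Theta_n}(t,\omega),\qquad c^n_t(\omega) \triangleq c_t(\omega)\, \1_{\Theta_n}(t,\omega),
\]
and truncate the kernel as $K^n_t(\omega; G) \triangleq K_t(\omega; G \cap \{\|x\| \leq n\})\, \1_{\Theta_n}(t,\omega)$. Condition \ref{cond: basic}(i) makes this triplet bounded on the whole of $[0,n] \times \Omega$, and Condition \ref{cond: gl bjp} ensures that the truncation in the jump size does not destroy any mass asymptotically. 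Condition \ref{cond: basic}(ii)-(iii) are inherited by $(b^n, c^n, K^n)$ on the set $\Theta_n$. Feeding this into the Jacod--Shiryaev existence theorem (the uniformly bounded version from \cite{JS}) yields probability measures $P^n$ under which $X$ is a semimartingale with local characteristics $(b^n, c^n, K^n; A)$ and $P^n \circ X_0^{-1} = \delta_z$.

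The main obstacle is the tightness of $(P^n)_{n \in \mathbb{N}}$, which I would address through the Jacod--Shiryaev tightness criterion for semimartingales expressed in terms of the characteristics; the localized version requires a uniform (in $n$) moment bound on $\sup_{s \leq t} \|X_s\|$. Under Condition \ref{cond: LG1} I would apply It\^o's formula to $\|X\|^2$ (or to $(1 + \|X\|^2)$), estimate the drift and second-characteristic contributions by $\gamma_a(s)(1 + \sup_{r \leq s} \|X_{r-}\|^2)$, and then run a stochastic Gronwall argument along Liptser--Shiryaev lines \cite{liptser1989theory} to obtain uniform control on exit times from balls. Under Condition \ref{cond: Ly1} the computation is cleaner: the inequality $(\mathcal{L}_a V_a)(\omega; s) \leq \gamma_a(s) V_a(\omega(s-))$ holds $\dd A_s$-a.e., so applying It\^o's formula to $V_a$, taking expectations, and combining with the coercivity $V_a \geq \beta_a(\|\cdot\|)$ yields uniform tightness of the marginals; the aforementioned criterion from \cite{JS} then promotes this to tightness in the Skorokhod sense. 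The big jump property is exactly what controls the tail of the compensator and keeps the jump-size truncation from ruining the criterion.

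Once an accumulation point $P^{n_k} \Rightarrow P$ is extracted, it remains to identify $P$ as a solution of the martingale problem associated with $(b, c, K; A)$. Characterizing semimartingales with prescribed characteristics via test functions (cf.\ \cite[Theorem II.2.42]{JS}), the task reduces to showing that for every $f \in C^2_b(\mathbb{R}^d)$ the process
\[
f(X_t) - f(X_0) - \int_0^t (\mathcal{L}_\infty f)(X; s)\, \dd A_s
\]
is a $P$-local martingale, and analogous relations for the functions $g \in C_1(\mathbb{R}^d)$ governing the jump measure. The Skorokhod continuity (Condition \ref{cond: basic}(ii)) and the local uniform continuity on Skorokhod-compact sets (Condition \ref{cond: basic}(iii)) are tailored so that, after a localization using the already established tightness, the integrands are continuous $P$-a.s.\ and the continuous mapping theorem can be applied to pass from the martingale property under $P^{n_k}$ to the martingale property under $P$. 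The residual terms created by the truncation $\1_{\Theta_n}$ and by the kernel cutoff vanish in the limit thanks to Condition \ref{cond: gl bjp}. This step is where I expect the most delicate bookkeeping, since the continuity properties of $b$, $\widetilde{c}$ and $\int g\, \dd K$ hold only $\dd A_t$-a.e.\ and only at points $\alpha$, so the passage to the limit must be combined with dominated convergence via the uniform moment bound already obtained.
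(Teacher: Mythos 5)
Your overall roadmap (reduce to $\delta_z$, approximate, prove tightness, identify the limit via the martingale problem) is the same as the paper's, but there are two genuine gaps in the way you execute it.

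First, the truncation by $\1_{\Theta_n}(t,\omega)$ destroys the continuity assumptions that you need in order to invoke the uniformly bounded Jacod--Shiryaev existence theorem. The map $\omega \mapsto \1\{\sup_{s\leq t}\|\omega(s-)\| \leq n\}$ is \emph{not} Skorokhod continuous (and a fortiori not uniformly continuous on compacts) at any path $\alpha$ with $\sup_{s\leq t}\|\alpha(s-)\| = n$, so the modified triplet $(b^n,c^n,K^n)$ cannot inherit Condition~\ref{cond: basic}(ii)--(iii) from $(b,c,K)$. The paper circumvents this by using a \emph{smooth} cutoff $\phi^n \in C_c^\infty$ applied to $X^*_t = \sup_{s\leq t}\|X_{s-}\|$, together with a lemma proving that $\omega \mapsto \phi^n(X^*_t(\omega))$ is continuous at every $\alpha$ and every $t \notin J(\alpha)$; the Lipschitz continuity of $\phi^n$ is used for the local uniform continuity. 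You also truncate the jump sizes in $K^n$, which is an additional modification the paper deliberately avoids: it introduces a second limiting parameter into the identification step, with no corresponding gain, since the uniform boundedness needed for the approximating existence result already follows from the cutoff in the space variable and the big jump property.

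Second, your tightness argument under Condition~\ref{cond: LG1} is not self-contained: you propose to apply It\^o's formula to $1 + \|X\|^2$ and invoke the bound $\|b^a_t\|^2 + \|\widetilde{c}^a_t\| \leq \gamma_a(t)(1 + \sup_{s\leq t}\|\omega(s-)\|^2)$, but this bound concerns only the characteristics relative to the truncation $x\1\{\|x\|\leq a\}$, i.e.\ the characteristics of the process with its jumps of size $> a$ removed. Applying It\^o's formula to the full process $X$ produces large-jump contributions which Condition~\ref{cond: LG1} does not control. The paper resolves this by an explicit \emph{separation of big jumps}: write $X = X^a + Y^a$ with $Y^a = \sum_{s\leq\cdot}\Delta X_s\1\{\|\Delta X_s\|>a\}$, control $Y^a$ directly by Condition~\ref{cond: gl bjp} together with Lenglart's domination, stop at the first time $Y^a$ exceeds $1$, and run the Gronwall (resp.\ supermartingale) argument on $X^a$ only, whose local characteristics are exactly those appearing in Conditions~\ref{cond: LG1} and~\ref{cond: Ly1}. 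Without this splitting the Gronwall step cannot close. Your description of the Lyapunov case also skips this, and in addition the marginal bound you sketch is weaker than what is needed: the paper passes through a genuine nonnegative local martingale $Y$ and uses the supermartingale maximal inequality to control $\sup_{s\leq N}\|X_s\|$, not just fixed-time marginals.

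The final identification step you outline is in the right spirit (convergence of truncated characteristics, passage through [JS, Theorem~II.2.42]), though the paper's actual argument is more delicate: it first applies [JS, Theorem~IX.2.11] to the \emph{stopped} process $X_{\cdot\wedge\tau_m}$ for good levels $m$ (those in a co-countable set $U$ chosen so that $\omega\mapsto\tau_m(\omega)$ is $P$-a.s.\ continuous), establishes the local martingale property under the law of the stopped process, and then transfers it back to $P$ itself via $\mathcal{F}^o$-adaptedness and the backward martingale convergence theorem before letting $m\to\infty$.
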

\begin{remark}\label{rem: replace bjc}
In case
\begin{align}\label{eq: unif j bound}
\forall t \in \mathbb{R}_+\  \exists a > 0 \colon \sup_{s \in [0, t]} \sup_{\omega \in \Omega} K_s (\omega; \{x \in \mathbb{R}^d \colon \|x\| > a\}) < \infty,
\end{align}
the big jump condition (Condition \ref{cond: gl bjp}) can be replaced by 
\begin{align}\label{eq: weaker gl bjc}
\lim_{a \nearrow \infty}  \sup_{\omega \in \Omega} K_t (\omega; \{x \in \mathbb{R}^d \colon  \|x\| > a\}) = 0 \text{ for all } t \in \mathbb{R}_+,
\end{align}
see Remark \ref{rem: replace} in Section \ref{sec: sep jumps} below.
\end{remark}
The theorem can be viewed as a generalization of \cite[Theorem IX.2.31]{JS}, which replaces the uniform boundedness assumptions by  local boundedness assumptions and a Lyapunov-type condition or a linear growth condition. Recall that the function \(A\) is assumed to be deterministic and continuous. The continuity of \(A\) is not assumed in \cite[Theorem IX.2.31]{JS}. It implies that any semimartingale with local characteristics \((b, c, K; A)\) is quasi-left continuous, see \cite[Proposition II.2.9]{JS}.
Theorem \ref{theo:1} is proven in Section \ref{sec:p} below.

We need the big jump condition on \(K\) (Condition \ref{cond: gl bjp}) to obtain the existence of our approximation sequence and to show its tightness. In fact, \cite[Theorem VI.4.18]{JS} explains that a (weaker) condition of this type is necessary for tightness of our approximation sequence.
The big jump condition on \(K\) can be replaced by a local big jump condition when the big jumps are also taken into consideration in the Lyapunov and the linear growth condition. 
To state this modification, we introduce some additional notation: For a twice continuously differentiable function \(f \colon \mathbb{R}^d \to  \mathbb{R}\) and \((t, \omega) \in \mathbb{R}_+ \times \Omega\) and \(x \in \mathbb{R}^d\) we set 
\begin{align*}
(\mathcal{K}^df)(\omega; t, x) &\triangleq f(\omega(t-) + x) - f(\omega(t-)) - \sum_{k =  1}^d  \partial_{k} f(\omega(t-)) h^k(x),\\
(\mathcal{K}^l f)(\omega; t) &\triangleq \sum_{k = 1}^d \partial_{k} f(\omega(t-)) b^{k}_t (\omega) + \frac{1}{2} \sum_{k, j = 1}^d \partial^2_{kj} f (\omega(t-)) c^{kj}_t  (\omega),
\end{align*}
and
\begin{align*}
(\mathcal{L} f) (\omega; t) \triangleq  (\mathcal{K}^l f)(\omega; t)+ \int (\mathcal{K}^d f)(\omega; t, x)  K_t(\omega; \dd x),
\end{align*}
provided the last term is well-defined. Furthermore, for \(m > 0\) and \(t \in [0, m]\) we set 
\[
\Theta_m^t \triangleq \big\{ \omega \in \Omega \colon (t, \omega) \in \Theta_m\big\}.
\]
\begin{condition}\label{cond: lbjc}
	 \emph{Local big jump property of \(K\):} For all \(m > 0\) and \(t \in [0, m]\)
	\[
	\lim_{a \nearrow \infty} \sup_{\omega \in \Theta_m^t} K_t (\omega; \{x \in \mathbb{R}^d \colon \|x\| > a\}) = 0.
	\]
\end{condition}
\begin{condition}\label{cond: Ly2}
	\emph{Lyapunov condition II:} 
	There exist Borel functions \(V \colon \mathbb{R}^d \to (0, \infty), \gamma \colon \mathbb{R}_+ \to \mathbb{R}_+\) and \(\beta \colon \mathbb{R}_+ \to \mathbb{R}_+\) with the following properties:
	\begin{enumerate} 
		\item[\textup{(a)}] \(V \in C^{2}(\mathbb{R}^d)\). 
		\item[\textup{(b)}] \(\int_0^t \gamma (s)\dd A_s < \infty\) for all \(t \in \mathbb{R}_+\).
		\item[\textup{(c)}] \(\beta\) is increasing and
		\(
		\lim_{n \to \infty} \beta(n) = \infty.
		\)
		\item[\textup{(d)}] For all \((t, \omega)\in \mathbb{R}_+ \times \Omega\) we have \(V(\omega(t)) \geq \beta(\|\omega(t)\|),\)
		\begin{align}\label{eq: ito int}
		\int_0^t \int \big|(\mathcal{K}^d V)(\omega, s, x) \big| K_s(\omega; \dd x) \dd A_s < \infty,
		\end{align}
		and \begin{align*}
		\int_0^t \1 \{\gamma(s) V(\omega(s-)) < (\mathcal{L}  V)(\omega; s)\} \dd A_s = 0.
		\end{align*}
		\end{enumerate}
\end{condition}

\begin{condition}\label{cond: LG2}
	\emph{Linear growth condition II:} There exits a Borel function \(\gamma \colon \mathbb{R}_+ \to \mathbb{R}_+\) 
	such that \(\int_0^t \gamma(s) \dd A_s < \infty\) for all \(t \in \mathbb{R}_+\) and for all \(\omega \in \Omega\) and
	for \(\dd A_t\)-a.a. \(t \in \mathbb{R}_+\)
	\begin{equation}\label{eq: LG2}
	\begin{split}
	\|b_t(\omega)\|^2 + \|\widetilde{c}_t(\omega)\| + \int \|h'(x)\|^2 K_t(\omega; \dd x) &\leq \gamma (t) \Big(1 + \sup_{s \in [0, t]} \|\omega(s-)\|^2\Big),
	\\
	\int \|h'(x)\| K_t(\omega; \dd x) &\leq \gamma (t) \Big(1 + \sup_{s \in [0, t]} \|\omega(s-)\|^2\Big)^{\frac{1}{2}},
	\end{split}
	\end{equation}
	where \(h'(x) \triangleq x - h(x)\).
\end{condition}

Our second main result is the following:
\begin{theorem}\label{theo:2}
Suppose that the Conditions \ref{cond: basic} and \ref{cond: lbjc} hold and that one of the Conditions \ref{cond: Ly2} and \ref{cond: LG2} holds.
	Then, there exists a probability measure \(P\) on \((\Omega, \mathcal{F})\) such that the coordinate process \(X\) is a \(P\)-semimartingale with local characteristics \((b, c, K; A)\) and initial law \(\eta\).
\end{theorem}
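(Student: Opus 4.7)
My strategy is to reduce Theorem \ref{theo:2} to Theorem \ref{theo:1} by an approximation that truncates large jumps, followed by a tightness and martingale-problem argument.

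For each $n \in \mathbb{N}$, define the approximate kernel
\[
K^n_t(\omega; \dd x) \triangleq \psi_n(\|x\|)\, K_t(\omega; \dd x),
\]
where $\psi_n \colon \mathbb{R}_+ \to [0,1]$ is a smooth cutoff with $\psi_n = 1$ on $[0, n]$ and $\psi_n = 0$ on $[n+1, \infty)$ (smoothness is convenient for preserving the Skorokhod continuity in Condition \ref{cond: basic}(ii)). The triplet $(b, c, K^n)$ is still a candidate triplet, inherits the local majoration and continuity properties from $(b, c, K)$, and satisfies the global big jump condition \ref{cond: gl bjp} trivially, since $K^n$ is supported in $\{\|x\| \le n+1\}$. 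Taking $a = n+1$ in Condition \ref{cond: Ly1} (resp.\ \ref{cond: LG1}), the operator $\mathcal{L}_a$ applied to $(b, c, K^n)$ coincides with $\mathcal{L}$ applied to $(b, c, K^n)$; its difference with $\mathcal{L}$ applied to $(b, c, K)$ consists of tail integrals of the form $\int_{\|x\| > n} |\mathcal{K}^d V|\, K(\dd x)$ (resp.\ of terms involving $h'(x) = x - h(x)$), which are dominated by the quantities controlled in Condition \ref{cond: Ly2} (resp.\ \ref{cond: LG2}). Consequently the truncated triplet satisfies the hypotheses of Theorem \ref{theo:1}, which yields probability measures $P^n$ under which $X$ is a semimartingale with local characteristics $(b, c, K^n; A)$ and initial law $\eta$.

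Next, I establish tightness of $\{P^n\}_{n \ge 1}$ in the Skorokhod topology. Because $K^n \le K$ pointwise and the Lyapunov function $V$ (resp.\ the growth envelope in \eqref{eq: LG2}) does not depend on $n$, the Lyapunov or Gronwall argument alluded to in the introduction goes through uniformly in $n$ and produces a uniform moment estimate $\sup_n E^{P^n}\bigl[\sup_{s \le t} \beta(\|X_s\|)\bigr] < \infty$ (respectively the linear-growth analogue, where the path-dependent right-hand side of \eqref{eq: LG2} is absorbed via a Gronwall-type inequality on $E^{P^n}[\sup_{s \le \cdot} \|X_s\|^2]$). Combining this uniform control with a localization of the characteristics-based tightness criterion \cite[Theorem VI.4.18]{JS} yields tightness of $\{P^n\}$; let $P$ denote any subsequential weak limit.

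Finally, I identify $P$ through the martingale problem: for a sufficiently rich class of test functions $f \in C^2_b(\mathbb{R}^d)$ and $g \in C_1(\mathbb{R}^d)$, the processes
\[
f(X_t) - f(X_0) - \int_0^t (\mathcal{L} f)(X; s)\, \dd A_s
\]
together with the compensated jump functionals built from $K$ must be $P$-local martingales; this characterizes the local characteristics $(b, c, K; A)$ in the sense of \cite{JS}. The Skorokhod continuity (Condition \ref{cond: basic}(ii)) and its local uniform refinement (Condition \ref{cond: basic}(iii)) handle the passage to the limit for the continuous and small-jump contributions. The main obstacle, and the delicate point of the whole argument, is to identify the large-jump contribution in the limit: under each $P^n$ the jumps are bounded by $n+1$, whereas under $P$ all jump sizes must appear with intensity $K$. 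Here Condition \ref{cond: lbjc} is essential: combined with the uniform moment bound, which confines $X$ to $\Theta_m^t$ with probability $1 - o(1)$ uniformly in $n$, it supplies uniform tail control of $K_s(\omega; \{\|x\| > a\})$ and lets the compensators built from $K^n$ converge to those built from $K$ as $n \to \infty$, closing the martingale-problem identification.
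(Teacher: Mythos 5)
Your approach is genuinely different from the paper's: you truncate the jump kernel, setting $K^n \triangleq \psi_n(\|\cdot\|)\, K$, and try to reduce Theorem \ref{theo:2} to Theorem \ref{theo:1}; the paper instead handles both theorems at once by truncating the \emph{whole} triplet along the running maximum $X^*_t=\sup_{s\le t}\|X_{s-}\|$ and the time variable, and then applying \cite[Theorem IX.2.31]{JS} to the truncated triplet (after the truncation, the local majoration and local big-jump conditions automatically become global on the support of the truncated coefficients, which lies inside $\Theta_{n+1}$). The place where your reduction breaks is the unproved assertion that Condition \ref{cond: Ly2} (resp.\ \ref{cond: LG2}) for $(b,c,K)$ implies Condition \ref{cond: Ly1} (resp.\ \ref{cond: LG1}) for $(b,c,K^n)$; as stated, this does not hold.

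In the Lyapunov case, a direct computation for $a > n+1$ gives
\[
(\mathcal{L}_a V)^{(b,c,K^n)}(\omega;s) \;=\; (\mathcal{L} V)^{(b,c,K)}(\omega;s)\;-\;\int_{\|x\|>n}(\mathcal{K}^d V)(\omega;s,x)\bigl(1-\psi_n(\|x\|)\bigr)K_s(\omega;\dd x).
\]
The tail integral may be negative (large jumps decreasing $V$), so to get $\gamma_a(s)V(\omega(s-))\ge (\mathcal{L}_a V)(\omega;s)$ you would need a pointwise bound of that tail by $c_n(s)\,V(\omega(s-))$ with $\int c_n\,\dd A<\infty$; but \eqref{eq: ito int} only gives $\omega$-wise $\dd A$-integrability of $\int|\mathcal{K}^d V|K$, not a uniform majorization by $V$. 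In the linear-growth case, for $a\ge n+1$ one has $b^{a,n}=b+\int h'(x)\psi_n(\|x\|)K(\dd x)$, hence $\|b^{a,n}\|^2\lesssim(\gamma+\gamma^2)\bigl(1+\sup_{s\le t}\|\omega(s-)\|^2\bigr)$ by \eqref{eq: LG2}; Condition \ref{cond: LG1} then requires the prefactor to be $\dd A$-integrable, but $\int_0^t\gamma\,\dd A<\infty$ does not imply $\int_0^t\gamma^2\,\dd A<\infty$. The paper sidesteps this entirely: tightness under Condition \ref{cond: LG2} is obtained in Section \ref{sec: LG2} by decomposing $X$ into a drift, the martingales $M$ and $N$, and the large-jump compensator $\int_0^\cdot\int h'\,K^n\,\dd A$, and applying Doob and H\"older estimates to each term separately, never passing through the quadratic form $\|b^a\|^2+\|\widetilde c^a\|$ of Condition \ref{cond: LG1}. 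A similar remark applies to the role of Condition \ref{cond: lbjc}: in the paper it is used already to apply \cite[Theorem IX.2.31]{JS} and in the tightness step, not primarily in the limiting identification as you suggest.
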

Theorem \ref{theo:2} is also proven in Section \ref{sec:p} below.

\begin{remark}
	In Condition \ref{cond: basic} (i) one can replace \(\Theta_m\) by
	\[
	\Theta_m^* \triangleq \Big\{(t, \omega) \in  [0, m] \times \Omega \colon \sup_{s \in [0, t]} \|\omega(s)\| \leq m\Big\} \subset \Theta_m
	\]
	and in Condition \ref{cond: lbjc} one can replace \(\Theta_m^t\) by 
	\(\{ \omega \in \Omega \colon (t, \omega) \in \Theta_m^* \} \subset \Theta^t_m.
	\)
	Furthermore, in \eqref{eq: LG} and \eqref{eq: LG2} one can replace \(\sup_{s \in [0, t]} \|\omega(s-)\|\) by \(\sup_{s \in [0, t]} \|\omega(s)\|\).
	This follows from part (d) of \cite[Lemma III.2.43]{JS}, which states that for a predictable process \(H\) and all \(t > 0\) and \(\omega, \alpha \in \Omega\) 
	\[\omega (s) = \alpha(s) \text{ for all } s < t  \quad \Rightarrow \quad H_t(\omega) = H_t(\alpha).\]
\end{remark}

Due to this observation, we expect part (i) of Condition \ref{cond: basic} to be close to optimal for a local boundedness condition.
We give some examples for functions having the Skorokhod continuity property and the local uniform continuity property:
\begin{example}\label{ex: cont assp}
	Let \(g \colon \mathbb{R}_+ \times \mathbb{R}^d \to \mathbb{R}\) be a Borel function such that \(x \mapsto g(t, x)\) is continuous for all \(t \in \mathbb{R}_+\). Furthermore, fix \(t > 0\).
	\begin{enumerate}
		\item[(a)] The map \(\omega \mapsto g(t, \omega(t-))\) is continuous at each \(\alpha \in \Omega\) such that \(t \not \in J(\alpha) \triangleq \{s > 0 \colon \alpha(s) \not = \alpha(s-)\}\), see \cite[VI.2.3]{JS}. Recalling that \(A\) is deterministic and continuous and that any \cadlag function has at most countably many discontinuities, we see that the set \(J(\alpha)\) is a \(\dd A_t\)-null set and, consequently, that the Skorokhod continuity property holds. Furthermore, the local uniform continuity property holds. To see this, note that for each compact set \(K \subset \Omega\) there exists a compact set \(K_t \subset \mathbb{R}^d\) such that \(\omega (s) \in K_t\) for all \(\omega \in K\) and \(s \in [0, t]\), see \cite[Problem 16, p. 152]{EK}. Using that continuous functions on compact sets are uniformly continuous, for each \(\varepsilon > 0\) there exists a \(\delta = \delta (\varepsilon) > 0\) such that 
		\[
		x, y \in K_t \colon \|x - y\| < \delta \quad \Rightarrow \quad |g(t, x) - g(t, y)| < \varepsilon.
		\]
		Now, if \(\omega, \alpha \in K\) are such that \(\sup_{s \in [0, t]} \|\omega(s) - \alpha(s)\| < \delta\) we have \(\omega(t-), \alpha (t-) \in K_t\), because \(K_t\) is closed, and \(\|\omega(t-) - \alpha(t-)\| \leq \sup_{s \in [0, t]} \|\omega(s) - \alpha(s)\| < \delta\). Consequently, we have 
		\[
		|g(t, \omega(t-)) - g(t, \alpha(t-))| < \varepsilon. 
		\]
		This shows that the local uniform continuity property holds.
		\item[(b)] If \(g\) is continuous, the map \(\omega \mapsto \int_0^t g(s, \omega(s-))\dd A_s\) is continuous. This follows from the fact that \(\omega \mapsto g(s, \omega(s-))\) is continuous at each \(\alpha \in \Omega\) such that \(s \not \in J(\alpha)\), the dominated convergence theorem and the fact that \(J(\alpha)\) is a \(\dd A_t\)-null set. Furthermore, the map \(\omega \mapsto \int_0^t g(s, \omega(s-))\dd A_s\) has the local uniform continuity property. To see this, let \(K \subset \Omega\) and \(K_t \subset \mathbb{R}^d\) be as in part (a) and fix \(\varepsilon > 0\). Without loss of generality we assume that \(A_t > 0\). Because \(g\) is uniformly continuous on \([0, t] \times K_t\) we find a \(\delta = \delta(\varepsilon) > 0\) such that 
		\[
		x, y \in K_t \colon \|x - y\| < \delta \quad \Rightarrow \quad |g(s, x) - g(s, y)| < \frac{\varepsilon}{2A_t}
		\]
		for all \(s \in [0, t]\).
		Now, for all \(\omega, \alpha \in K\) such that \(\sup_{s \in [0, t]} \|\omega(s) - \alpha(s)\| < \delta\) we have
		\begin{align*}
		\Big| \int_0^t g(s, \omega(s-))\dd A_s &- \int_0^t g(r, \alpha(r-))\dd A_r \Big| \\&\leq \int_0^t \Big| g(s, \omega(s-)) - g(s, \alpha(s-))\Big| \dd A_s < \varepsilon,
		\end{align*}
		which gives the local uniform continuity property.
		\item[(c)] If \(g\) is continuous, the map \(\omega \mapsto \sup_{s \in [0, t]} g(s, \omega(s-))\) is continuous at each \(\alpha \in \Omega\) such that \(t \not \in J(\alpha)\).
		This can be seen with the arguments used in the proof of Lemma \ref{lem: Sk cont} below. Furthermore, the local uniform continuity property holds, which follows with the argument from part (b) and the inequality
		\begin{align*}
		\Big| \sup_{s \in [0, t]} g(s, \omega(s-)) - \sup_{r \in [0, t]} g(r, \alpha(r-))\Big| &\leq \sup_{s \in [0, t]} \Big| g (s, \omega(s-)) - g(s, \alpha(s-))\Big|.
		\end{align*}
	\end{enumerate}
\end{example}
We now comment on the big jump property and the local big jump property.
\begin{example}\label{eq: examp big jump}
	\begin{enumerate}
		\item[\textup{(a)}]
		If \(K_t (\omega; \dd x) = F(\dd x)\) for a L\'evy measure \(F\), then the big jump property of \(K\) (Condition \ref{cond: gl bjp}) holds, because 
		\[
		F(\{x \in \mathbb{R}^d \colon \|x\| > a\}) \to 0 \text{ with } a \nearrow \infty.
		\]
		However, Condition \ref{cond: LG1} can fail, because \(\|h'\|\) might not be \(F\)-integrable, i.e. \(F\) corresponds to a L\'evy process with infinite mean. 
		\item[\textup{(b)}] When we consider a one-dimensional SDE of the type 
		\[
		\dd X_t = g_t (X) \dd L_t, 
		\]
		where \(g\) is predictable and \(L\) is a L\'evy process, then \(\Delta X_t = g_t (X) \Delta L_t\) and, consequently, we consider
		\[
		K_t (G) = \int \1_{G \backslash \{0\}} (g_t (X) y) F(\dd y), \quad G \in \mathcal{B}(\mathbb{R}),
		\]
		where \(F\) is the L\'evy measure corresponding to \(L\).
		In this case, we obtain 
		\[
		K_t (\{x \in \mathbb{R} \colon |x| > a\}) = F( \{y \in \mathbb{R} \colon |y| |g_t(X)| > a\} ).
		\]
		If for \(m \in \mathbb{N}\) there is a constant \(c_m > 0\) such that \(\sup_{(t, \omega) \in [0, m] \times \Omega}|g_t(\omega)| \leq c_m\), then we have 
		\[
		\sup_{t \in [0, m]} \sup_{\omega \in \Omega} F\big( \{y \in \mathbb{R} \colon |y| |g_t(X (\omega))| > a\} \big) \leq F\big(\big\{y \in  \mathbb{R} \colon |y| > \tfrac{a}{c_m}\big\}\big) \to 0 
		\]
		with \(a \nearrow \infty\).
		However, if \(g\) is unbounded, the global big jump property of \(K\) (Condition \ref{cond: gl bjp}) might fail, while the local big jump property of \(K\) (Condition \ref{cond: lbjc}) and the Condition \ref{cond: LG2} might hold. 
		\item[\textup{(c)}] For a jump-diffusion setting we discuss the local big jump property in Section \ref{sec: 2.3} below.
	\end{enumerate}
\end{example}

Next, we provide examples to understand the Lyapunov-type conditions. 
\begin{example}
	\begin{enumerate}
		\item[\textup{(a)}] For  \(V(x) \triangleq 1 + \|x\|^2\) the Lyapunov-type Conditions \ref{cond: Ly1} and \ref{cond: Ly2} correspond to a linear growth condition. For example, if there exists a Borel function \(\gamma \colon \mathbb{R}_+ \to \mathbb{R}_+\) such that for all \(t \in \mathbb{R}_+\) we have \(\int_0^t \gamma(s)\dd A_s < \infty\) and
		\begin{equation}\label{eq: linear growth ly}\begin{split}
		\int_{\|x\| \leq a} \big(\|X_{t-} + x\|^2 - \|&X_{t-}\|^2 - 2\langle X_{t-}, x\rangle \big) K_t (\dd x) \\&+2\langle X_{t-}, b^a_t \rangle  + \|c_t\|\leq \gamma (t) \big(1 + \|X_{t-}\|^2\big), 
		\end{split}
		\end{equation}
		then Condition \ref{cond: Ly1} is satisfied. This linear growth condition is different from Condition \ref{cond: LG1}. On one hand, the growth condition \eqref{eq: linear growth ly} allows an interplay of the coefficients. For example, if \(d = 1\) and 
		\(
		b_t \equiv - X_{t-}^3, c_t \equiv 2X_{t-}^4, K \equiv 0, 
		\) 
		then
		\[
		2  \langle X_{t-}, b_t \rangle + \|c_t\| =  - 2 X_{t-}^4  + 2 X_{t-}^4 = 0 \leq  1 + X_{t-}^2,
		\]
		although \(|b_t|\) and \(|c_t|\) are not of linear growth. 
		On the other hand, Condition \ref{cond: LG1} takes the whole history of the paths into consideration.
		\item[\textup{(b)}] Let us consider the case \(d = 1\) where \(b \equiv K \equiv 0\), i.e. we are looking for a probability measure \(P\) on \((\Omega, \mathcal{F})\) such that the coordinate process is a one-dimensional continuous local \(P\)-martingale with quadratic variation process \(\int_0^\cdot c_s \dd A_s\).  Suppose there exists an \(a > 1\) and a constant \(\zeta < \infty\) such that for all \((t, \omega) \in \mathbb{R}_+ \times \Omega \colon \|\omega(t-)\| < a\) we have
		\(c_t(\omega) \leq \zeta\). 
		Then, the Lyapunov-type Conditions \ref{cond: Ly1} and \ref{cond: Ly2} hold with \(\gamma (t) \triangleq \frac{a^2 \zeta}{\log (a^2)}\) and \(V(x) \triangleq \log(a^2 + |x|^2)\). To see this, note that
		\begin{align*}
		\gamma (t) V(X_t) - (\mathcal{L} V)(t)& = \frac{a^2 \zeta}{\log (a^2)} V(X_t) +\Big(\frac{|X_{t-}|^2 - a^2}{(a^2 + |X_{t-}|^2)^2}\Big) c_t 
		\\&\geq a^2\big(\zeta - c_t \1 \{|X_{t-}| < a\}		\big)
		\geq 0.
		\end{align*}
		In particular, the Conditions \ref{cond: Ly1} and \ref{cond: Ly2} hold when \(c_s (\omega) = \overline{c}(\omega(s-)) \iota_s (\omega)\) for a locally bounded function \(\overline{c}\colon \mathbb{R} \to \mathbb{R}_+\) and a bounded process \(\iota\). 
		This observation can be seen as a generalization of the well-known result that one-dimensional SDEs of the type 
		\begin{align*}
		\dd X_t = \sqrt{\overline{c}(X_t)}\ \dd W_t
		\end{align*}
		have non-exploding weak solutions whenever the coefficient \(\overline{c} \colon \mathbb{R} \to \mathbb{R}_+\) is continuous. 
	\end{enumerate}
\end{example}
\begin{remark} 
	As already indicated in Example \ref{eq: examp big jump}, if we have
	\[
	K_t(\omega; G) = \int \1_{G \backslash  \{0\}} (v(t, \omega, y))F(\dd y),\quad G \in \mathcal{B}(\mathbb{R}^d),
	\] 
	where \(v\) is \(\mathcal{P} \otimes \mathcal{B}(\mathbb{R}^d)\)-measurable and \(F\) is a L\'evy measure on \(\mathbb{R}^d\), then \((b, c, K)\) corresponds to an SDE driven by L\'evy noise, see \cite[Theorem III.2.26]{JS}. Here, \(\mathcal{P}\) denotes the predictable \(\sigma\)-field. In this case, Condition \ref{cond: LG2} is in the spirit of the linear growth conditions from
	\cite[Theorems 14.23, 14.95]{J79} and \cite[Theorem III.2.32]{JS}, which are stated together with local Lipschitz conditions. 
	In particular, Condition \ref{cond: LG2} holds under the following linear growth condition:
	\emph{There exist two Borel functions \(\gamma \colon \mathbb{R}_+ \to \mathbb{R}_+\)
		and \(\theta \colon \mathbb{R}_+ \times \mathbb{R}^d \to \mathbb{R}_+\) such that for all \((t, \omega, y) \in \mathbb{R}_+ \times \Omega \times \mathbb{R}^d\) we have \(\int_0^t( \gamma(s) +\int |\theta(s, x)|^2 F(\dd x))\dd A_s < \infty\) and} 
	\begin{align*}
	\|b_t(\omega)\|^2 + \|\widetilde{c}_t(\omega)\| &\leq \gamma (t) \Big(1 + \sup_{s \in [0, t]} \|\omega(s)\|^2\Big),\\
	\|h'(v(t, \omega, y))\| &\leq \Big(\theta(t, y) \wedge |\theta(t, y)|^2\Big)  \Big(1 + \sup_{s \in [0, t]} \|\omega(s)\|^2\Big)^{\frac{1}{2}}.
	\end{align*}
	Local Lipschitz conditions imply the existence of a local solution. We do not work with a local solution, but construct a solution by approximation. The local Lipschitz conditions also imply uniqueness, which is a property not provided by the approximation argument. 
	Uniform boundedness and continuity conditions for the existence of weak solutions to SDEs driven by semimartingales were proven by Jacod and M\'emin \cite{doi:10.1080/17442508108833169} and Lebedev \cite{doi:10.1080/17442508308833247}. Lebedev \cite{doi:10.1080/17442508408833291} also proved Lyapunov-type conditions.
\end{remark}
As already indicated in the previous remark, Lyapunov-type and linear growth conditions for the existence of weak solutions to SDEs are sometimes combined with conditions implying the existence of a local solution.
Next, we explain the method used by Stroock and Varadhan \cite{SV} to construct a global solution from a local solution and discuss some differences between arguments based on extension and approximation.

The following proposition is a version of Tulcea's extension theorem, which follows from \cite[Theorem 1.1.9]{SV} in the same manner as its continuous analogous \cite[Theorem 1.3.5]{SV} does.
\begin{proposition}\label{prop:extension}
	Let \((\tau_n)_{n \in \mathbb{N}}\) be an increasing sequence of \((\mathcal{F}^o_t)_{t \geq 0}\)-stopping times and let \((P^n)_{n \in \mathbb{N}}\) be a sequence of probability measures on \((\Omega, \mathcal{F})\) such that \(P^n = P^{n +1}\) on \(\mathcal{F}_{\tau_n}^o\) for all \(n \in \mathbb{N}\). If \(\lim_{n \to \infty} P^n(\tau_n \leq t) = 0\) for all \(t \in \mathbb{R}_+\), then there exists a unique probability measure \(P\) on \((\Omega, \mathcal{F})\) such that \(P = P_n\) on \(\mathcal{F}^o_{\tau_n}\) for all \(n \in \mathbb{N}\).
\end{proposition}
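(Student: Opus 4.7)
The plan is to construct $P$ by a Tulcea-type extension on the Skorokhod space, following the strategy that yields the continuous-path analogue \cite[Theorem 1.3.5]{SV}. Since the $\tau_n$ are increasing we have $\mathcal{F}^o_{\tau_n} \subset \mathcal{F}^o_{\tau_{n+1}}$, so the union $\mathcal{A} \triangleq \bigcup_{n} \mathcal{F}^o_{\tau_n}$ is an algebra of subsets of $\Omega$. I would define a set function $\mu$ on $\mathcal{A}$ by $\mu(A) \triangleq P^n(A)$ for $A \in \mathcal{F}^o_{\tau_n}$; the consistency hypothesis $P^n = P^{n+1}$ on $\mathcal{F}^o_{\tau_n}$ makes $\mu$ well-defined and finitely additive.

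The central step is $\sigma$-additivity of $\mu$ on $\mathcal{A}$. Since $(\Omega, \mathcal{F})$ is Polish I would disintegrate the consecutive measures: for each $n \geq 1$ (with $\tau_0 \triangleq 0$), choose a regular conditional probability $Q_n(\omega, \cdot)$ of $P^n$ given $\mathcal{F}^o_{\tau_{n-1}}$. The family $(Q_n)_{n \geq 1}$ plays the role of the sequence of kernels in Tulcea's theorem \cite[Theorem 1.1.9]{SV}, and gluing them yields a probability $P$ on $(\Omega, \sigma(\mathcal{A}))$ whose restriction to $\mathcal{F}^o_{\tau_n}$ coincides with $P^n$ for every $n$, by iterated use of consistency.

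To extend $P$ from $\sigma(\mathcal{A})$ to $\mathcal{F}$ and to establish uniqueness I would use the non-explosion hypothesis. For any $t \in \mathbb{R}_+$ and any Borel $B \subset \mathbb{R}^d$, the event $\{X_t \in B\} \cap \{\tau_n > t\}$ lies in $\mathcal{F}^o_{\tau_n}$, while $\mu(\{\tau_n \leq t\}) = P^n(\tau_n \leq t) \to 0$ shows that $\{X_t \in B\}$ belongs to the $\mu$-completion of $\sigma(\mathcal{A})$; hence $P$ extends uniquely to $\mathcal{F}$, and any two candidates must agree on $\mathcal{F}$ by the $\pi$--$\lambda$ theorem applied to the generating algebra $\mathcal{A}$. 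The main technical obstacle I anticipate is the careful handling of regular conditional probabilities at the random times $\tau_n$ and the verification that the Tulcea gluing reproduces $P^n$ exactly on $\mathcal{F}^o_{\tau_n}$ (not merely up to the projective limit); on a Polish space this is standard but demands some bookkeeping, since the conditioning $\sigma$-fields are themselves random.
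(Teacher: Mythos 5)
Your overall plan — build the algebra $\mathcal{A} = \bigcup_n \mathcal{F}^o_{\tau_n}$, define a consistent finitely additive $\mu$, upgrade it to a measure via regular conditional probabilities in the spirit of Tulcea, and then pass to $\mathcal{F}$ — is indeed the route the paper points to via \cite[Theorems 1.1.9, 1.3.5]{SV}. However, you misplace the non-explosion hypothesis. You assert that existence of the kernels $Q_n$ alone yields $\sigma$-additivity of $\mu$ on $\mathcal{A}$, and reserve the hypothesis $\lim_n P^n(\tau_n \leq t) = 0$ solely for the passage from $\sigma(\mathcal{A})$ to $\mathcal{F}$. This is not correct: on the Skorokhod space, $\sigma$-additivity on $\mathcal{A}$ can genuinely fail without non-explosion, because the decreasing atoms of $\mathcal{F}^o_{\tau_n}$ along a Tulcea ``thread'' may have empty intersection in $\Omega$ (the limiting partial path need not extend to a c\`adl\`ag path). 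Concretely, take $\tau_n \equiv 1 - 1/n$ and $P^n = \delta_{\omega^{(n)}}$, where $\omega^{(n)}(t) = \lfloor 1/(1-t) \rfloor$ for $t < 1 - 1/n$ and $\omega^{(n)}(t) = n$ for $t \geq 1 - 1/n$. These are consistent, the kernels $Q_n$ exist trivially, but for $B_k = \{\alpha \colon \alpha = \omega^{(k)} \text{ on } [0, 1 - 1/k]\} \in \mathcal{F}^o_{\tau_k}$ one has $\mu(B_k) = 1$ for all $k$ while $\bigcap_k B_k = \emptyset$ (no c\`adl\`ag path agrees with $t \mapsto \lfloor 1/(1-t) \rfloor$ on $[0,1)$). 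Here $P^n(\tau_n \leq 1) = 1$ for all $n$, so non-explosion fails and so does $\sigma$-additivity.

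The correct role of the hypothesis is therefore at the ``gluing'' stage itself: one either verifies the non-emptiness condition of \cite[Theorem 1.1.9]{SV} directly with $\lim_n P^n(\tau_n \leq t) = 0$, or one performs the Tulcea construction on an enlarged space of paths allowed to explode in finite time (where the atom intersections are automatically nonempty) and then uses non-explosion to show the resulting measure puts full mass on $\Omega$. Your final step — that any $A \in \mathcal{F}^o_t$ satisfies $A \cap \{\tau_n > t\} \in \mathcal{F}^o_{\tau_n}$ and hence lies in the $\mu$-completion of $\sigma(\mathcal{A})$, giving the unique extension to $\mathcal{F}$ and uniqueness via the $\pi$--$\lambda$ argument — is correct, but it is a second, separate use of the hypothesis; it does not repair the gap in the $\sigma$-additivity step.
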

Supposing that \((P^n)_{n \in \mathbb{N}}\) is a local solution, the consistency assumption shows that the extension, provided it exists, is a global solution.

Stroock and Varadhan \cite{SV} construct a consistent sequence as in Proposition \ref{prop:extension} under a uniqueness condition. 
In general semimartingale cases, the consistency holds when the sequence \((P^n)_{n \in \mathbb{N}}\) has a local uniqueness property as define in \cite[Definition III.2.37]{JS}.
Local uniqueness is a strong concept of uniqueness, which in particular implies (global) uniqueness. 
In Markovian settings, such as the diffusion setting of Stroock and Varadhan, local uniqueness is implied by the existence of (globally) unique solutions for all degenerated initial laws, see \cite[Theorem III.2.40]{JS}.\footnote{The assumed kernel property in \cite[Theorem III.2.40]{JS} is implied by the uniqueness assumption. This follows from Lemma \ref{lem: P Borel} in Appendix \ref{sec:p2} and Kuratovski's theorem.}
In more general cases, however, local uniqueness is considered to be difficult to show, see the comment in the beginning of \cite[Section III.2d.2]{JS}. In our opinion, using local uniqueness is a natural approach to verify the consistency hypothesis. The approximation argument requires no uniqueness condition. However, it also provides no uniqueness statement.

A version of the convergence criterion \(\lim_{n \to \infty} P^n(\tau_n \leq t) = 0\) from Proposition \ref{prop:extension} is also verified in the tightness argument as presented in Section \ref{sec: tight} below. This is a similarity between the extension and the approximation argument and illustrates that both are soul mates in the point that they prevent a loss of mass.

In some cases it is possible to construct a consistent sequence as in Proposition \ref{prop:extension} without a uniqueness assumption. An example for such a case arises from a local change of measure.
Suppose that \(Q\) is a probability measure and that \(Z\) is a non-negative normalized local \(Q\)-martingale with localizing sequence \((\tau_n)_{n \in \mathbb{N}}\). We define a sequence \((P^n)_{n \in \mathbb{N}}\) by \(P^n = Z_{\tau_n} \cdot Q\), i.e. \(P^n(G) = E^Q [Z_{\tau_n} \1_G]\) for all \(G \in \mathcal{F}\). The consistency follows from the martingale property of \(Z_{\cdot \wedge \tau_n}\) via the optional stopping theorem. Consequently, 
the existence of an extension \(P\) of \((P^n)_{n \in \mathbb{N}}\) follows from Proposition \ref{prop:extension} if 
\[
1 = \lim_{n \to \infty} P^n(\tau_n > t) = \lim_{n \to \infty} E^Q [ Z_{\tau_n} \1 \{\tau_n > t\}] = E^Q [Z_t], \quad t \in \mathbb{R}_+,
\]
which is equivalent to the \(Q\)-martingale property of \(Z\). 
The extension \(P\) is  locally absolutely continuous with respect to \(Q\), because for all \(G \in \mathcal{F}_t^o\) we have \(G \cap \{\tau_n > t\} \in \mathcal{F}^o_{\tau_n}\) and thus
\[
Q(G) = 0 \quad \Rightarrow \quad P(G) = \lim_{n \to \infty} P(G \cap \{\tau_n > t\}) =  \lim_{n \to \infty} P^n(G \cap \{\tau_n > t\})  = 0.
\]
Consequently, if the coordinate process is a \(Q\)-semimartingale, it is also a \(P\)-semimartingale due to \cite[Theorem III.3.13]{JS}.
This argument does not require any form of uniqueness. However, it requires that there exists a probability measure \(Q\) for which the coordinate process is a semimartingale. Furthermore, the structure of the local characteristics under \(P\) is determined by \(Q\) and \(Z\) via Girsanov's theorem (see \cite[Theorem III.3.24]{JS}).
Nevertheless, we think that this method provides a possibility to relax the assumptions in the Theorems \ref{theo:1} and \ref{theo:2}.
Namely, one can apply one of our main results to obtain the probability measure \(Q\) and then deduce the existence of a probability measure \(P\) corresponding to local characteristics which need not to satisfy the continuity conditions formulated in Condition \ref{cond: basic}. We refer to \cite{criensglau2018} for a discussion of the extension method in a general  semimartingale setting.

\subsection{Application: Existence Conditions for Jump-Diffusions}\label{sec: 2.3}
In this subsection we discuss the classical jump-diffusion case as an important example.
Let \(\overline{b} \colon \mathbb{R}_+ \times \mathbb{R}^d \to \mathbb{R}^d\) and \(\overline{c} \colon \mathbb{R}_+ \times \mathbb{R}^d \to \mathbb{S}^{d}\) be Borel functions. Furthermore, let \(\overline{K}_t (x, \dd y)\) be a Borel transition kernel from \(\mathbb{R}_+ \times \mathbb{R}^d\) into \(\mathbb{R}^d\). 
Set for all \(t \in \mathbb{R}_+\)
\begin{align*}
b_t \triangleq \overline{b}(t, X_{t-}),\quad c_t \triangleq \overline{c}(t, X_{t-}),\quad K_t(\dd x) \triangleq \overline{K}_t(X_{t-}, \dd x).
\end{align*}
We assume that for all \(t \in \mathbb{R}_+\) and \(g \in C_1(\mathbb{R}^d)\) the maps \begin{align}\label{eq: cont assp}x \mapsto \overline{b}(t, x), \overline{c}^{ij}(t, x) + \int h^i(y) h^j(y) \overline{K}_t(x, \dd y), \int g(y) \overline{K}_t(x, \dd y)\end{align} are continuous. Then, the Skorokhod continuity property and the local uniform continuity property hold, see part (a) of Example \ref{ex: cont assp}.  Furthermore, we assume that the maps \[(t, x) \mapsto \overline{b}(t, x), \overline{c}(t, x), \int \big(1 \wedge \|y\|^2\big) \overline{K}_t(x, \dd y)\] are locally bounded. Then, the local majoration property holds.
Next, we deduce an existence result from Theorems \ref{theo:1} and \ref{theo:2}. We restrict the statement to the linear growth conditions because these are easier to formulate. Of course, Theorems \ref{theo:1} and \ref{theo:2} also provide Lyapunov-type existence criteria.
\begin{corollary}\label{coro:1}
	In addition to the assumptions above, suppose that one of the following two conditions holds:
	\begin{enumerate}
\item[\textup{(i)}]	For all \(t \in \mathbb{R}_+\) there exists an \(a > 0\) such that 
\[
\sup_{s \in [0, t]} \sup_{x \in \mathbb{R}^d} \overline{K}_s (x, \{y \in \mathbb{R}^d \colon \|y\| > a\}) < \infty,
\]	
for all \(t \in \mathbb{R}_+\)
\begin{align*} 
	\lim_{a \nearrow \infty} \sup_{x \in \mathbb{R}^d} \overline{K}_t (x, \{y \in \mathbb{R}^d \colon \|y\| > a\}) = 0
	\end{align*}
	and there exists a Borel function \(\gamma \colon \mathbb{R}_+ \to \mathbb{R}_+\) such that for all \(t \in \mathbb{R}_+\) and \(x \in \mathbb{R}^d\) we have \(\int_0^t  \gamma(s)\dd  A_s < \infty\) and
	\[
	\|\overline{b}(t, x)\|^2 + \|\overline{c}(t, x)\| + \int \big(1 \wedge \|y\|^2\big) \overline{K}_t (x, \dd y) \leq \gamma(t) \big(1 + \|x\|^2\big).
	\]
	\item[\textup{(ii)}]
	For all \(m > 0\) and \(t \in [0, m]\)
	\begin{align*} 
		\lim_{a \nearrow \infty} \sup_{\|x\| \leq m} \overline{K}_t (x, \{y \in \mathbb{R}^d \colon \|y\| > a\}) = 0
	\end{align*}
and there exits a Borel function \(\gamma \colon \mathbb{R}_+ \to \mathbb{R}_+\) 
such that for all \(t \in \mathbb{R}_+\) and \(x \in \mathbb{R}^d\) we have \(\int_0^t \gamma(s) \dd A_s < \infty\) and
\begin{equation*}
\begin{split}
\|\overline{b} (t, x)\|^2 + \|\widetilde{c}(t, x)\| + \int \|h'(y)\|^2 \overline{K}_t(x, \dd y) &\leq \gamma (t) \Big(1 + \|x\|^2\Big),
\\
\int \|h'(y)\| \overline{K}_t(x, \dd y) &\leq \gamma (t) \Big(1 + \|x\|^2\Big)^{\frac{1}{2}},
\end{split}
\end{equation*}
where \(h'(y) \triangleq y - h(y)\) and 
\[
\widetilde{c}^{ij} (t, x) \triangleq \overline{c}^{ij}(t, x) + \int h^i(y) h^j(y) \overline{K}_t(x, \dd y).
\]
	\end{enumerate}
	Then, there exists a probability measure \(P\) on \((\Omega, \mathcal{F})\) such that the coordinate process \(X\) is a \(P\)-semimartingale with local characteristics \((b, c, K; A)\) and initial law \(\eta\).
\end{corollary}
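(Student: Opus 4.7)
The plan is to derive Corollary \ref{coro:1} from Theorems \ref{theo:1} and \ref{theo:2}, applying the former under hypothesis (i) and the latter under hypothesis (ii). Condition \ref{cond: basic} has already been verified in the paragraph preceding the corollary: the Skorokhod and local uniform continuity properties come from the continuity of the maps in \eqref{eq: cont assp} together with part (a) of Example \ref{ex: cont assp}, and the local majoration follows from the local boundedness hypothesis on the maps $(t,x)\mapsto \overline{b}(t,x),\overline{c}(t,x),\int(1\wedge\|y\|^2)\overline{K}_t(x,\dd y)$. It therefore remains to check the remaining conditions in each of the two cases.

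\textbf{Case (i).} The first bullet of (i) is precisely \eqref{eq: unif j bound}, and the second bullet is precisely \eqref{eq: weaker gl bjc}, so by Remark \ref{rem: replace bjc} the big-jump assumption is covered. To verify Condition \ref{cond: LG1} I would take $\theta$ larger than the threshold appearing in the first bullet and bound $\|\widetilde{c}^a_t\|$ and $\|b^a_t\|^2$ by $\gamma_a(t)(1+\|X_{t-}\|^2)$. The bound on $\|\widetilde{c}^a_t\| = \|c_t\| + \int_{\|y\|\leq a}\|y\|^2 K_t(\dd y)$ follows by splitting the integral at $\|y\|=1$: the small-jump piece is controlled through $\int (1\wedge\|y\|^2)\overline{K}_t(x, \dd y)\leq \gamma(t)(1+\|x\|^2)$, while the medium-jump piece is bounded by $a^2$ times the uniform big-jump bound. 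For the correction $b^a_t-b_t = -\int(h(y)-y\1\{\|y\|\leq a\})K_t(\dd y)$ the integrand is bounded and supported away from $0$ (where $h(y)=y$), so the analogous splitting controls its norm, while $\|b_t\|^2$ itself is directly linearly bounded by the hypothesis.

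\textbf{Case (ii).} Here Theorem \ref{theo:2} applies. For Condition \ref{cond: lbjc}, observe that $\omega\in\Theta_m^t$ forces $\|\omega(t-)\|\leq m$, and therefore
\[
K_t(\omega;\{\|y\|>a\}) = \overline{K}_t(\omega(t-),\{\|y\|>a\}) \leq \sup_{\|z\|\leq m}\overline{K}_t(z,\{\|y\|>a\}) \longrightarrow 0
\]
as $a\to\infty$, by the assumed uniform convergence. For Condition \ref{cond: LG2} the verification is immediate: since $b_t(\omega)=\overline{b}(t,\omega(t-))$, $\widetilde{c}_t(\omega)=\widetilde{c}(t,\omega(t-))$ and $K_t(\omega;\dd y)=\overline{K}_t(\omega(t-),\dd y)$, the pointwise-in-$x$ bounds from (ii) translate into \eqref{eq: LG2} via the trivial inequality $\|\omega(t-)\|^2 \leq \sup_{s\in[0,t]}\|\omega(s-)\|^2$.

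The most delicate step is the verification of Condition \ref{cond: LG1} in Case (i), where one has to exploit both the $L^2$-type bound on small jumps and the uniform mass bound on large jumps to control the correction between the $h$- and $a$-truncations. Once all conditions are in place, Theorems \ref{theo:1} and \ref{theo:2} deliver the desired probability measure $P$.
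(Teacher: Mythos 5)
Your overall strategy---invoking Theorem \ref{theo:1} together with Remark \ref{rem: replace bjc} for hypothesis (i), and Theorem \ref{theo:2} for hypothesis (ii)---is the right one, and your Case (ii) argument is sound: Condition \ref{cond: lbjc} follows from the observation that $\omega \in \Theta_m^t$ forces $\|\omega(t-)\|\leq m$, and the pointwise-in-$x$ bounds of hypothesis (ii) give \eqref{eq: LG2} because $\|\omega(t-)\|^2\leq\sup_{s\in[0,t]}\|\omega(s-)\|^2$ and $b_t(\omega)=\overline{b}(t,\omega(t-))$, etc.

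In Case (i), however, the verification of Condition \ref{cond: LG1} does not close at the point where you say ``the analogous splitting controls its norm''. Let $r_0\in(0,1]$ be such that $h(y)=y$ on $\{\|y\|\leq r_0\}$. The correction $b^a-b = -\int(h(y)-y\1\{\|y\|\leq a\})K(\dd y)$ has integrand supported on $\{\|y\|>r_0\}$ and bounded by $\|h\|_\infty+a$, so $\|b^a_t-b_t\|\leq(\|h\|_\infty+a)K_t(\{\|y\|>r_0\})$. Hypothesis (i) controls $K_t(\{\|y\|>r_0\})$ only through $r_0^2 K_t(\{\|y\|>r_0\})\leq\int(1\wedge\|y\|^2)\overline{K}_t(x,\dd y)\leq\gamma(t)(1+\|x\|^2)$---a bound that is linear in $1+\|x\|^2$, not in its square root---and the uniform big-jump bound from the first bullet does not improve this, since it controls mass only beyond some threshold $a_0$ which may well exceed $r_0$. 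Squaring thus gives a $(1+\|x\|^2)^2$ term that $\gamma_a(t)(1+\|x\|^2)$ cannot dominate. This is not merely a missing estimate: take $d=1$, $\overline{b}=\overline{c}\equiv 0$, $A_t=t$, $h$ the standard truncation, $\overline{K}_t(x,\dd y)=(1+x^2)\delta_2(\dd y)$. All bullets of hypothesis (i) hold with $\gamma\equiv 1$ (and any $a\geq 2$ in the first bullet), yet $b^a_t=1+X_{t-}^2$ for $a>2$, so $\|b^a\|^2\sim(1+\|x\|^2)^2$ and Condition \ref{cond: LG1} fails; the corresponding process, with jump intensity $1+X_{t-}^2$ and jumps of size $2$, explodes in finite time. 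So your Case (i) step needs an additional hypothesis, e.g.\ a control of $K_t(\{\|y\|>r_0\})$ of order $\sqrt{1+\|x\|^2}$, or the growth condition should be formulated directly for $b^a$ and $\widetilde{c}^a$. A smaller point: your bound for the medium-jump piece of $\|\widetilde{c}^a\|$ should go through $a^2 K_t(\{\|y\|>1\})\leq a^2\gamma(t)(1+\|x\|^2)$ via the third bullet, not through the uniform big-jump bound, which only covers $\{\|y\|>a_0\}$.
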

This existence result can be viewed as a generalization of \cite[Corollary IX.2.33]{JS} where the global boundedness assumptions is replaced by a local boundedness assumptions and a linear growth condition.

As pointed out by a referee, the local big jump condition has a relation to the symbol associated to the candidate triplet. In the following we explain this connection.
We define the symbol associated to \((b, c, K)\) by
\[
q_t(x, \xi) \triangleq - i \langle \overline{b}(t, x), \xi\rangle + \frac{1}{2} \langle \overline{c}(t, x) \xi, \xi\rangle + \int \big(1 - e^{i \langle y, \xi\rangle} + i \langle h(y), \xi\rangle \big)\overline{K}_t(x, \dd y)
\]
for \(t \in \mathbb{R}_+\) and \(x, \xi \in \mathbb{R}^ d\). Recall that \(\overline{b}, \overline{c}\) and \(\overline{K}\) are such that the maps \eqref{eq: cont assp} are continuous.
\begin{proposition}\label{lem: sym lbj}
	For \(t \in \mathbb{R}_+\) consider the following properties:
	\begin{enumerate}
		\item[\textup{(i)}]
For all \(\xi \in \mathbb{R}^d\) the map 
\begin{align}\label{eq: symbol part}
x \mapsto \int \big(1 - e^{i \langle y, \xi\rangle} + i \langle h(y), \xi\rangle \big)\overline{K}_t(x, \dd y)
\end{align}
is continuous.
		\item[\textup{(ii)}]
		For all \(\xi \in \mathbb{R}^d\) the map \(x \mapsto q_t(x, \xi)\) is continuous.
		\item[\textup{(iii)}]
		The local big jump property holds, i.e. 
		\[
		\lim_{a \nearrow \infty} \sup_{\|x\| \leq m} \overline{K}_t (x, \{y \in \mathbb{R}^d \colon \|y\| > a\}) = 0 \text{ for all } m > 0.
		\]
	\end{enumerate}
Then, \textup{(i)} \(\Leftrightarrow\) \textup{(ii)} \(\Rightarrow\) \textup{(iii)}.
\end{proposition}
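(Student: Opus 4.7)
The plan is in two stages. First, I would establish (i) $\Leftrightarrow$ (ii) by an algebraic manipulation isolating the continuous part of $q_t$, and then derive (iii) from (ii) using L\'evy's continuity theorem for infinitely divisible distributions.

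For the equivalence, I would start from
\[
q_t(x, \xi) - \Phi_t(x, \xi) = -i\langle \overline{b}(t,x), \xi\rangle + \frac{1}{2}\langle \overline{c}(t,x)\xi, \xi\rangle,
\]
where $\Phi_t(x,\xi)$ denotes the integral appearing in (i). The standing assumption provides continuity of $\overline{b}(t, \cdot)$ and of $\widetilde{c}^{ij}(t, \cdot) = \overline{c}^{ij}(t,\cdot) + \int h^i(y) h^j(y)\, \overline{K}_t(\cdot, dy)$. Using the identity $\frac{1}{2}\langle \overline{c}\xi, \xi\rangle = \frac{1}{2}\langle \widetilde{c}\xi, \xi\rangle - \frac{1}{2}\int \langle h(y), \xi\rangle^2\, \overline{K}_t(x, dy)$, the equivalence reduces to continuity in $x$ of $\int \langle h(y), \xi\rangle^2\, \overline{K}_t(x, dy)$. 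To obtain this, I would approximate $h^i h^j$ by $C_2(\mathbb{R}^d)$ functions away from the origin (continuity of the approximating integrals is provided by the running hypothesis on $C_1$-test functions, extended via the convergence-determining property to $C_2$) and control the near-origin error using the local majoration of $\int (1 \wedge \|y\|^2)\, \overline{K}_t$.

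For (ii) $\Rightarrow$ (iii), the key observation is that for each $(t, x)$ the function $\xi \mapsto \exp(-q_t(x, \xi))$ is the characteristic function of an infinitely divisible probability measure $\nu_x^t$ on $\mathbb{R}^d$ with L\'evy measure $\overline{K}_t(x, \cdot)$. Property (ii) implies pointwise convergence of these characteristic functions along any sequence $x_n \to x$, so by L\'evy's continuity theorem $\nu_{x_n}^t \to \nu_x^t$ weakly on $\mathbb{R}^d$. The classical correspondence between weak convergence of infinitely divisible laws and convergence of their L\'evy triples, and in particular the tightness at infinity of the associated L\'evy measures (see, e.g., Chapter VII of \cite{JS}), then yields
\[
\lim_{a \to \infty}\limsup_{n}\overline{K}_t(x_n, \{\|y\| > a\}) = 0
\]
along every convergent sequence in the compact ball $\{\|x\| \leq m\}$. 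A standard subsequence argument---if the uniform statement in (iii) failed, one could extract a sequence in $\{\|x\|\leq m\}$ violating pointwise tightness---upgrades this to the uniform local big jump property.

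The main obstacle is the tightness step in (ii) $\Rightarrow$ (iii): one must carefully justify that weak convergence of the infinitely divisible distributions $\nu_{x_n}^t$ forces the L\'evy measures $\overline{K}_t(x_n, \cdot)$ to be tight at infinity, which is classical but delicate because the canonical L\'evy triples involve the modified second characteristic $\widetilde{c}$ rather than $\overline{c}$ itself. The equivalence (i) $\Leftrightarrow$ (ii) is essentially bookkeeping once the identity relating $\overline{c}$ and $\widetilde{c}$ is invoked, but it still requires care because the truncation function $h$ is not a $C_2$ test function.
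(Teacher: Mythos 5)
Your proof of (ii) $\Rightarrow$ (iii) is a genuinely different and valid route from the paper's. The paper simply cites \cite[Theorems 4.5.6, 4.5.7]{niels2001pseudo} and \cite[Theorem 4.4]{Schilling1998}, which deduce the statement from the pseudo-differential operator / Feller semigroup machinery surrounding continuous negative definite functions. Your argument goes directly through probability: recognize $e^{-q_t(x,\cdot)}$ as the characteristic function of an infinitely divisible law, apply L\'evy's continuity theorem to get weak convergence along sequences $x_n \to x$, invoke the classical equivalence between weak convergence of ID laws and convergence of their triplets to get pointwise tightness at infinity of the L\'evy measures, and then upgrade to the uniform statement over a compact ball by a subsequence argument. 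This is the probabilistic face of the same coin and is, in my view, cleaner and more self-contained than a pair of citations; the one place to be careful, which you correctly flag, is that the ID-law convergence theorem is phrased in terms of the truncated second moment $\widetilde{c}$ rather than $\overline{c}$, so the tightness at infinity has to be extracted from the convergence of $\nu_n\vert_{\{\|y\|>1\}}$ (weakly and in total mass), which is part of the standard statement.

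However, there is a gap in your argument for (i) $\Leftrightarrow$ (ii). You correctly observe that the difference $q_t(x,\xi) - \Phi_t(x,\xi) = -i\langle\overline{b},\xi\rangle + \tfrac{1}{2}\langle\overline{c}\xi,\xi\rangle$ involves $\overline{c}$, not $\widetilde{c}$, and that the standing assumptions only guarantee continuity of $\widetilde{c}(t,\cdot)$, so the equivalence hinges on continuity in $x$ of $\int\langle h(y),\xi\rangle^2\,\overline{K}_t(x,dy)$. But your proposed remedy --- approximate $h^ih^j$ by $C_2$-functions $g_\epsilon$ vanishing near the origin and control the near-origin remainder by local majoration of $\int(1\wedge\|y\|^2)\overline{K}_t$ --- does not close the gap. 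For $x_n \to x$ you would need
\[
\sup_n \int_{\|y\|\le\epsilon} \|y\|^2\,\overline{K}_t(x_n,dy) \longrightarrow 0 \quad \text{as } \epsilon\searrow 0,
\]
i.e.\ uniform integrability of $\|y\|^2 \overline{K}_t(x_n,\cdot)$ near the origin along the sequence. Local majoration gives only a uniform \emph{bound} $\sup_n\int(1\wedge\|y\|^2)\overline{K}_t(x_n,dy)<\infty$, which does not prevent the small-ball contributions from staying bounded away from zero (e.g.\ a sequence of kernels concentrating mass $1/\epsilon_n^2$ at radius $\epsilon_n \downarrow 0$). So the approximation argument as stated does not prove the required continuity. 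The natural way to close the gap is to route (i) $\Leftrightarrow$ (ii) through the \emph{same} L\'evy-continuity mechanism you already set up for (ii) $\Rightarrow$ (iii): pointwise convergence of the symbols forces convergence of the modified second characteristic of the ID triplet, which, since $\widetilde{c}(t,\cdot)$ is continuous by assumption, gives continuity of $\int h^ih^j\,\overline{K}_t(\cdot,dy)$ and hence of $\overline{c}(t,\cdot)$. This makes the two halves of the proposition two applications of one idea, rather than two unrelated arguments.
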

\begin{proof}
	The equivalence (i) \(\Leftrightarrow\) (ii) is obvious. 
	The implication (ii) \(\Rightarrow\) (iii) follows from \cite[Theorems 4.5.6, 4.5.7]{niels2001pseudo} and \cite[Theorem 4.4]{Schilling1998}.
\end{proof}

This observation has two consequences which we would like to mention.
First, in many applications the continuity of \eqref{eq: symbol part}
 is easy to see and Proposition \ref{lem: sym lbj} can be used to verify the local big jump property. 

Second, in some existence results for time-homogeneous jump-diffusions it is assumed that the symbol is continuous, see, for instance,  \cite[Theorem 3.15]{Hoh98} and \cite[Theorem 2.2]{kuhn18}.
Proposition \ref{lem: sym lbj} relates this assumption to the local big jump condition from Corollary \ref{coro:1}.

\section{Proof of Theorem \ref{theo:1} and Theorem \ref{theo:2}}\label{sec:p}
In view of Proposition \ref{prop: ex} it suffices to show the claim for all degenerated initial laws, i.e. we assume that \(\eta= \delta_z\), where \(z \in \mathbb{R}^d\) is chosen arbitrary. Here \(\delta\) denotes the Dirac measure.
The proof is split into three steps: First, we construct a sequence of probability measures, see Section \ref{sec: Apro}.
Second, we show that the sequence is tight, see Section \ref{sec: tight}. This step requires different arguments under the assumptions of Theorem \ref{theo:1} and Theorem \ref{theo:2}.
Third, we use a martingale problem argument to identify any accumulation point of the sequence as a probability measure under which the coordinate process is a semimartingale with local characteristics \((b, c, K; A)\) and initial law \(\delta_z\), see  Section \ref{sec: MPA}.

In general, we assume that the Conditions \ref{cond: basic} and \ref{cond: lbjc} hold. In case we impose additional assumptions in one of the following sections we indicate these in the beginning.

\subsection{The Approximation Sequence \((P^n)_{n \in \mathbb{N}}\)}\label{sec: Apro}
Let \(\phi^n \colon \mathbb{R} \to [0, 1]\) be a sequence of cutoff functions, i.e. \(\phi^n\in C^\infty_c(\mathbb{R})\) with \(\phi^n(x) = 1\) for \(x \in [-n, n]\) and \(\phi^n(x) = 0\) for \(|x| \geq n + 1\). We define \(X^*_t \triangleq \sup_{s \in [0, t]} \|X_{s-}\|\) 
for \(t \in \mathbb{R}_+\) and note that \(X^*\) is a predictable process, because it is left continuous and adapted.
Set
\begin{align*}
b^n_t &\triangleq  \phi^n(X^*_t) \1 \{t \leq n + 1\} b_t,\\ c^n_t &\triangleq  \phi^n(X^*_{t}) \1 \{t \leq n + 1\}c_t,\\ K^n_t(\dd y) &\triangleq \phi^n(X^*_{t}) \1 \{t \leq n + 1\} K_t(\dd y).
\end{align*}
It is clear that \((b^n, c^n, K^n)\) is a candidate triplet. Fix \(n \in \mathbb{N}\).
Our goal is to apply \cite[Theorem IX.2.31]{JS} to conclude that there exists a probability measure \(P^n\) such that the coordinate process is a \(P^n\)-semimartingale with local characteristics \((b^n, c^n, K^n; A)\) and initial law \(\delta_z\). We proceed by checking the prerequisites of \cite[Theorem IX.2.31]{JS}.

By the local majoration property of the candidate triplet \((b, c, K)\) (Condition \ref{cond: basic} (i)) the modified triplet \((b^n, c^n, K^n)\) has the following global majoration property: 
\begin{align*}
\sup_{t \in \mathbb{R}_+}\sup_{\omega \in\Omega} & \Big( \|b^n_t (\omega)\| + \|c^n_t(\omega)\| + \int \big(1 \wedge \|x\|^2\big) K^n_t(\omega; \dd x)\Big) 
\\&\ \leq \sup_{(t, \omega) \in \Theta_{n + 1}} \Big( \|b_t (\omega)\| + \|c_t(\omega)\| + \int \big(1 \wedge \|x\|^2\big) K_t(\omega; \dd x)\Big) 
< \infty.
\end{align*}

Furthermore, the triplet \((b^n, c^n, K^n)\) has the following modified Skorokhod continuity property: For all \(t \in \mathbb{R}_+\) and \(g \in C_1(\mathbb{R}^d)\) the maps
\[
\omega \mapsto \int_0^t b^n_s (\omega)\dd A_s, \int_0^t \widetilde{c}^n_s (\omega) \dd A_s, \int_0^t \int g(x) K^n_s(\omega; \dd x)\dd A_s
\]
are continuous for the Skorokhod topology. To see this, we first note the following: 
\begin{lemma}\label{lem: Sk cont} The map \(\omega \mapsto \phi^n(X^*_t (\omega))\) is continuous at \(\alpha \in \Omega\) for all \(t \not \in J(\alpha) = \{s > 0 \colon \alpha(s) \not = \alpha(s-)\}\).
\end{lemma}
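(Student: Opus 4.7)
The plan is to reduce to the scalar-valued map $\omega \mapsto X^*_t(\omega)$ and then invoke continuity of $\phi^n$. Since $\phi^n$ is smooth, it suffices to prove that $X^*_t(\omega_k) \to X^*_t(\alpha)$ whenever $\omega_k \to \alpha$ in the Skorokhod topology and $t \notin J(\alpha)$.

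I would start from the standard characterisation of Skorokhod convergence. Picking any continuity point $T > t$ of $\alpha$ (which exists because $J(\alpha)$ is at most countable), there exist continuous strictly increasing bijections $\lambda_k$ of $\mathbb{R}_+$ such that $\lambda_k \to \mathrm{id}$ and $\omega_k \circ \lambda_k \to \alpha$ uniformly on $[0,T]$. Setting $t_k \triangleq \lambda_k^{-1}(t)$, one has $t_k \to t$, and since $\lambda_k$ is continuous and strictly increasing, $\omega_k(\lambda_k(u)-) = (\omega_k \circ \lambda_k)(u-)$ for $u \in (0,T]$. A change of variables thus rewrites
\[ X^*_t(\omega_k) = \sup_{u \in [0,t_k]} \|(\omega_k \circ \lambda_k)(u-)\|. \]

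From here I would proceed in two stages. First, uniform convergence $\omega_k \circ \lambda_k \to \alpha$ on $[0,T]$ passes to the left limits (since $\|f(u-) - g(u-)\| \leq \sup_{v \in [0,T]} \|f(v) - g(v)\|$ for any $u \in (0,T]$), which makes $\sup_{u \in [0,t_k]} \|(\omega_k \circ \lambda_k)(u-)\|$ and $\sup_{u \in [0,t_k]} \|\alpha(u-)\|$ differ by a quantity tending to zero. Second, I would show that the nondecreasing function $s \mapsto \sup_{u \in [0,s]} \|\alpha(u-)\|$ is continuous at $t$, so that $\sup_{u \in [0,t_k]} \|\alpha(u-)\| \to X^*_t(\alpha)$.

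The main technical point is this last continuity claim, which uses the assumption $t \notin J(\alpha)$ critically. Left-continuity at $t$ is straightforward: selecting a sequence of non-jump points $u_n \uparrow t$ of $\alpha$ (possible since $J(\alpha)$ is countable) gives $\|\alpha(u_n-)\| = \|\alpha(u_n)\| \to \|\alpha(t-)\| = \|\alpha(t)\|$, so $\|\alpha(t-)\|$ is already captured from the left. Right-continuity is more delicate but follows from the observation that $\alpha(u-) \to \alpha(t+) = \alpha(t)$ as $u \downarrow t$ (a consequence of càdlàg regularity together with $t \notin J(\alpha)$), so $\sup_{u \in (t,s]} \|\alpha(u-)\|$ shrinks to $\|\alpha(t)\|$ as $s \downarrow t$, a value already dominated by $X^*_t(\alpha)$. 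Once these two facts are in place, assembling $X^*_t(\omega_k) \to X^*_t(\alpha)$ and composing with the continuous $\phi^n$ completes the argument.
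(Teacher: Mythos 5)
Your proof is correct and follows essentially the same route as the paper: pass to time changes $\lambda_k$ from the Skorokhod characterization, control $|X^*_t(\omega_k) - X^*_{\lambda_k^{-1}(t)}(\alpha)|$ by the uniform difference, and then use continuity at $t$ of the nondecreasing function $s \mapsto X^*_s(\alpha)$ (which is where $t \notin J(\alpha)$ enters). The paper states the last step without elaboration, while you spell out the left- and right-continuity of $X^*_\cdot(\alpha)$ at $t$; the only nit is that your parenthetical attributes the convergence $\alpha(u-) \to \alpha(t+)$ to $t \notin J(\alpha)$, whereas that holds for any c\`adl\`ag path — the hypothesis is needed only to dominate $\|\alpha(t)\| = \|\alpha(t-)\|$ by $X^*_t(\alpha)$, as you in fact use at the end.
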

\begin{proof}
	Let \((\alpha_n)_{n \in \mathbb{N}}\subset \Omega\) such that \(\alpha_n \to \alpha\) as \(n \to \infty\). By \cite[Theorem VI.1.14]{JS} there exists a sequence \((\lambda_n)_{n \in \mathbb{N}}\) of strictly increasing continuous functions \(\mathbb{R}_+ \to \mathbb{R}_+\) such that \(\lambda_n (0) = 0, \lambda_n (t) \nearrow \infty\) as \(t \to \infty\) and for all \(N \in \mathbb{N}\)
	\begin{align}\label{eq:sup skcont}
	\sup_{s \in \mathbb{R}_+} |\lambda_n(s) - s| + \sup_{s \in [0, N]} \|\alpha_n (\lambda_n (s)) - \alpha (s)\| \to 0 
	\end{align}
	as \(n \to \infty\). Now, we have 
	\begin{align*}
	\Big| X^*_t (\alpha_n) - X^*_{\lambda_n^{-1} (t)} (\alpha)\Big| 
	\leq \sup_{s \in [0, \lambda^{-1}_n(t)]} \|\alpha_n (\lambda_n(s)) - \alpha(s)\| \to 0
	\end{align*}
	as \(n \to \infty\) by \eqref{eq:sup skcont}. In case \(t \not \in J(\alpha)\), \eqref{eq:sup skcont} also yields that
	\begin{align*}
	\Big|X^*_{\lambda^{-1}_n (t)}(\alpha) - X^*_t (\alpha)\Big| \to 0\text{ as } n \to \infty.
	\end{align*}
	Thus, \(\omega \mapsto X^*_t(\omega)\) is continuous at \(\alpha\) for all \(t \not \in J(\alpha)\). Because \(\phi^n\) is continuous, this implies the claim.
\end{proof}
Because \cadlag functions have at most countably many discontinuities, for each \(\alpha \in \Omega\) the set \(J(\alpha)\) is at most countable. Thus, because the function \(t \mapsto A_t\) is assumed to be continuous, the set \(J(\alpha)\) is a \(\dd A_t\)-null set. Now, the modified Skorokhod continuity property of \((b^n, c^n, K^n)\) follows from the Skorokhod continuity property of \((b, c, K)\) (Condition \ref{cond: basic} (ii)) and the dominated convergence theorem. 

Finally, we also note that the modified triplet \((b^n, c^n, K^n)\) has the following modified local uniform continuity property: 
\begin{lemma}
	For all \(t \in \mathbb{R}_+, g \in C_1(\mathbb{R}^d), i, j = 1, \dots, d\) and all compact sets \(K \subset \Omega\) any \(k \in \{\omega \mapsto b^{n, i}_t(\omega),  \widetilde{c}^{n,ij}_t (\omega) , \int g(x) K^n_t(\omega; \dd x)\}\) has the uniform continuity property that for all \(\varepsilon > 0\) there exists a \(\delta = \delta(\varepsilon)> 0\) such that for all \(\omega, \alpha \in K\) 
	\[
	\sup_{s \in [0, t]}\|\omega(s) - \alpha(s)\| < \delta\quad \Rightarrow \quad |k(\omega) - k(\alpha)| < \varepsilon.
	\]
\end{lemma}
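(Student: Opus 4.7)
The plan is to factorize each of the three functions appearing in the statement as a product of two scalar-valued maps on $\Omega$, both of which are bounded and uniformly continuous on $K$ for the local uniform topology, and then invoke the elementary fact that a product of two such maps is itself bounded and uniformly continuous.

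First I would exploit the continuity of $A$: since $\Delta A_t = 0$ for every $t$, the modified second characteristic reduces to $\widetilde{c}^{ij}_t = c^{ij}_t + \int h^i(x) h^j(x) K_t(\dd x)$, and hence
\[
\widetilde{c}^{n,ij}_t(\omega) = \phi^n\big(X^*_t(\omega)\big)\, \1\{t \leq n + 1\}\, \widetilde{c}^{ij}_t(\omega).
\]
The corresponding factorizations
\(b^{n,i}_t(\omega) = \phi^n(X^*_t(\omega))\,\1\{t \leq n+1\}\, b^i_t(\omega)\) and
\(\int g(x) K^n_t(\omega; \dd x) = \phi^n(X^*_t(\omega))\,\1\{t\le n+1\}\int g(x) K_t(\omega; \dd x)\) are immediate from the definition of \((b^n, c^n, K^n)\). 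If $t > n+1$ the functions vanish identically and the claim is trivial, so from now on I may assume $t \leq n+1$ and write $k \in \{b^i_t, \widetilde{c}^{ij}_t, \int g(x) K_t(\dd x)\}$ for the unmodified factor.

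Next I would verify that the cutoff factor $\omega \mapsto \phi^n(X^*_t(\omega))$ is uniformly continuous on all of $\Omega$ for the local uniform topology. Indeed, because $\|\omega(s-) - \alpha(s-)\| \leq \sup_{r \in [0,t]} \|\omega(r) - \alpha(r)\|$ for every $s \in (0,t]$, the map $\omega \mapsto X^*_t(\omega)$ is $1$-Lipschitz for the local uniform pseudo-distance on $[0,t]$, and composing with the globally Lipschitz function $\phi^n$ yields the desired uniform continuity (with values in $[0,1]$, in particular bounded).

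Finally, Condition \ref{cond: basic} (iii) supplies the uniform continuity of $k$ on $K$ for the local uniform topology. It remains to see that $k$ is also bounded on $K$: any Skorokhod-compact $K \subset \Omega$ is uniformly bounded in supremum norm on $[0,t]$ (see \cite[Problem 16, p. 152]{EK}), so $K \subset \Theta_m^t$ for some $m > 0$, and the local majoration property (Condition \ref{cond: basic} (i)) yields $\sup_{\omega \in K} |k(\omega)| \leq M < \infty$. The standard $\varepsilon/2$-argument for products of bounded uniformly continuous functions then delivers the uniform continuity of $\phi^n(X^*_t)\, k$ on $K$, and the lemma follows.

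The main obstacle is purely a matter of bookkeeping: one must keep track of the $\Delta A_t = 0$ simplification in order to ensure the cutoff factorizes cleanly out of $\widetilde{c}^{n,ij}$, and one must remember that Skorokhod-compact subsets of $\Omega$ are automatically sup-norm bounded on compact time intervals so that the local majoration property is applicable. No genuinely new estimate is needed beyond those already encoded in Condition \ref{cond: basic}.
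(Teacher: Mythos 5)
Your proof is correct and follows essentially the same route as the paper: factor out the cutoff $\phi^n(X^*_t)$, observe that $X^*_t$ is $1$-Lipschitz for the local uniform pseudo-distance so $\phi^n(X^*_t)$ is (uniformly) Lipschitz, invoke Condition \ref{cond: basic}(iii) for the unmodified factor, and conclude with the product rule for bounded uniformly continuous functions. You do supply some helpful bookkeeping the paper states more tersely --- namely, that $\Delta A \equiv 0$ makes $\widetilde{c}^{n}$ factor cleanly, and that the boundedness of the unmodified factor on $K$ (which the paper simply asserts as $\|g\|_\infty < \infty$) comes from Skorokhod compactness implying $K \subset \Theta_m^t$ together with the local majoration property --- but these are refinements of the same argument, not a different one.
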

\begin{proof}
	By the local uniform continuity property of \((b, c, K)\) (Condition \ref{cond: basic} (iii)) it suffices to consider \(k (\omega) = \phi^n(X^*_t(\omega)) g(\omega)\), where \(g\) already has the uniform continuity property and \(|g|\) is bounded by a constant \(\|g\|_\infty > 0\).
	We fix \(\varepsilon > 0\). There exists a \(\delta^* = \delta^*(\varepsilon) > 0\) such that for all \(\omega, \alpha \in K\)
	\[
	\sup_{s \in [0, t]}\|\omega(s) - \alpha(s)\|  <  \delta^*\quad \Rightarrow \quad |g(\omega) - g(\alpha)| < \tfrac{\varepsilon}{2}.
	\]
	Because smooth functions with compact support are Lipschitz continuous, there exists a constant \(L  > 0\) such that 
	\begin{align*}
	|\phi^n(X^*_t(\omega)) - \phi^n(X^*_t(\alpha))| 
	&\leq L \sup_{s \in [0, t]} \|\omega(s) - \alpha (s)\|.
	\end{align*}
	Now, choose \(\delta \triangleq \min (\delta^*, \varepsilon (2 L \|g\|_\infty)^{-1})\). Then, we obtain for all \(\omega, \alpha \in K\colon \sup_{s \in [0, t]} \|\omega(s) - \alpha(s)\| < \delta\) that 
	\begin{align*}
	|k (\omega) - k(\alpha)| &\leq \|g\|_\infty |\phi^n(X^*_t(\omega)) - \phi^n(X^*_t(\alpha))| + |g(\omega) - g(\alpha)| 
	< \tfrac{\varepsilon}{2} + \tfrac{\varepsilon}{2} = \varepsilon.
	\end{align*}
	We conclude that \(k\) has the uniform continuity property. 
\end{proof}

Finally, we note that for all \(t \in \mathbb{R}_+\)
\begin{align*}
\lim_{a \nearrow \infty} \sup_{\omega \in \Omega}\ &K^n_t (\omega; \{x \in \mathbb{R}^d \colon \|x\| > a\})
\\&\leq \lim_{a \nearrow \infty} \sup_{\omega\in \Theta^{t \wedge (n + 1)}_{n + 1}} K_{t \wedge (n + 1)} (\omega; \{x \in \mathbb{R}^d \colon \|x\| > a\}) = 0, 
\end{align*}
by the local big jump property of \(K\) (Condition \ref{cond: lbjc}). In summary, we conclude that the prerequisites of \cite[Theorem IX.2.31]{JS} are fulfilled. Consequently, there exists a probability measure \(P^n\) such that the coordinate process \(X\) is a \(P^n\)-semimartingale with local characteristics \((b^n, c^n, K^n; A)\) and initial law \(\delta_z\). 

\subsection{Tightness of \((P^n)_{n \in \mathbb{N}}\)}\label{sec: tight}
For \(m > 0\) we define the stopping time
\[
\rho_m \triangleq \inf \big(t \in \mathbb{R}_+ \colon \|X_t\| > m\big)\wedge m.
\]
For \(m > 0\) and \(n \in \mathbb{N}\) we define \(P^{n, m}\) to be the law of the stopped process \(X_{\cdot \wedge \rho_m}\) under \(P^n\). 
Our strategy is first to show tightness for \((P^{n, m})_{n \in \mathbb{N}}\) and then to deduce the tightness of \((P^n)_{n \in \mathbb{N}}\) with the help of the Lyapunov and linear growth conditions.

\subsubsection{Tightness of \((P^{n, m})_{n \in \mathbb{N}}\).} \label{sec: tight help seq}
Let \((b^{n, m}, c^{n, m}, K^{n, m}; A)\) be the local characteristics of \(X_{\cdot \wedge \rho_m}\) under \(P_n\). Due to \cite[Lemma 2.3]{KMK}, we have 
\[
b^{n, m} = \1_{\of 0, \rho_m\gs} b^n, \quad c^{n, m} = \1_{\of 0, \rho_m\gs} c^n, \quad K^{n, m} (\dd x) = \1_{\of 0, \rho_m\gs} K^n(\dd x),
\]
where \(\of 0, \rho_m\gs\triangleq \{(t, \omega) \in \mathbb{R}_+ \times \Omega \colon 0 \leq t \leq \rho_m(\omega)\}\).
The tightness of \((P^{n, m})_{n \in \mathbb{N}}\) follows from \cite[Theorem VI.5.10]{JS} once we show the following four conditions:
\begin{enumerate}
	\item[(i)] The sequence \((P^{n, m} \circ X_0^{-1})_{n \in \mathbb{N}}\) is tight.
	\item[(ii)] For all \(t, \epsilon > 0\) we have 
	\[
	\lim_{a \nearrow \infty} \limsup_{n \to \infty} P^{n, m} \Big(\int_0^t K^{n, m}_s (\{x \in \mathbb{R}^d  \colon \|x\| > a\}) \dd A_s > \epsilon \Big) = 0.
	\]
	\item[(iii)] The sequence \((P^{n, m} \circ (\int_0^\cdot b^{n, m}_s \dd A_s)^{-1})_{n \in \mathbb{N}}\) is tight.
	\item[(iv)] For all \(p \in \mathbb{N}\) there exists a deterministic increasing process \(G^{p}\) such that
	\[
	G^{p} - \int_0^\cdot \Big(\sum_{i = 1}^d c^{n, m, ii}_t + \int \Big(\sum_{i = 1}^d |h^i(x)|^2 + (p \|x\| - 1)^+ \wedge 1\Big) K^{n, m}_t(\dd x) \Big)\dd A_t
	\]
	is an increasing process for all \(n \in \mathbb{N}\).
\end{enumerate}

Because \(P^{n, m} \circ X_0^{-1} = \delta_z\) for all \(n, m \in \mathbb{N}\), (i) is trivially satisfied. 
Due to \cite[Fact 2.9, Theorem 2.17]{10.2307/2297861} the map
\[
[0, m] \ni t \mapsto \sup_{\omega \in \Theta_m^t} K_t(\omega; \{x \in \mathbb{R}^d \colon \|x\| > a\})
\]
is universally measurable (see \cite[Definition 2.8]{10.2307/2297861}). Thus, the integral
\[
\int_0^m \sup_{\omega \in \Theta_m^t} K_t(\omega; \{x \in \mathbb{R}^d \colon \|x\| > a\}) \dd A_t
\]
is well-defined. Moreover, we have for all \(a \geq 1\) and \(t \in [0, m]\)
\[
 \sup_{\omega \in \Theta_m^t} K_t(\omega; \{x \in \mathbb{R}^d \colon \|x\| > a\}) \leq \sup_{(s, \omega) \in \Theta_m} \int \big(1 \wedge \|x\|^2\big) K_s(\omega; \dd x) < \infty, 
\]
by local majoration property (Condition \ref{cond: basic} (i)). 
Consequently, we deduce from Chebyshev's inequality, the local big jump property of \(K\) (Condition \ref{cond: gl bjp}) and the dominated convergence theorem that for all \(t ,\varepsilon > 0\)
\begin{align*}
\limsup_{n \to \infty} P^{n, m} &\Big(\int_0^{t} K^{n, m}_s (\{x \in \mathbb{R}^d \colon \|x\| > a\})\dd A_s  > \varepsilon\Big) \\&\leq \frac{1}{\varepsilon} \int_0^m \sup_{\omega \in \Theta^s_m} K_s (\omega; \{x \in \mathbb{R}^d \colon \|x\| > a\}) \dd A_s \to 0 \text{ with } a \nearrow \infty.
\end{align*}
We conclude that (ii) holds.
We set 
\[
\gamma^i \triangleq \sup_{(s, \omega) \in \Theta_m} |b^i_s(\omega)|,\quad i =1, \dots, d.
\]
The local majoration property (Condition \ref{cond: basic} (i)) implies that \(\gamma^i < \infty\) for all \(i = 1, \dots, d\).
Denote by \(\text{Var} (\cdot)\) the variation process. It is easy to see that the process
\[
\sum_{i = 1}^d \gamma^i A - \sum_{i = 1}^d \text{Var}\Big(\int_0^\cdot b^{n, m, i}_s\dd A_s \Big) = \sum_{i = 1}^d \int_0^\cdot \big(\gamma^i - |b^{n, m, i}_s|\big) \dd A_s
\]
is increasing. Thus, we deduce from \cite[Propositions VI.3.35, VI.3.36]{JS} that (iii) holds. 
Similarly, the local majoration property implies that (iv) holds. We conclude from \cite[Theorem VI.5.10]{JS} that \((P^{n, m})_{n \in \mathbb{N}}\) is tight.

\subsubsection{Non-Explosion implies Tightness.}\label{sec: tight and exp} We recall \cite[Theorem 15.47]{HWY}: A sequence \((Q^n)_{n \in \mathbb{N}}\) of probability measures on \((\Omega, \mathcal{F})\) is tight if, and only if, for every \(N \in \mathbb{N}\) and \(\varepsilon, \delta > 0\) there exist \(K, M > 0\) such that 
\begin{align*}
\limsup_{n \to \infty} Q^n \Big(\sup_{t \in [0, N]} \|X_t\| \geq K \Big) &\leq \varepsilon, 
\\
\limsup_{n \to \infty} Q^n \Big(w'(M, X, N) \geq \delta\Big) &\leq \varepsilon, 
\end{align*}
where \(w'\) is the modulus of continuity defined on p. 438 in \cite{HWY}. We only need the following property of \(w'\): For a random time \(\tau\) we have 
\[
w'(M, X, N) = w'(M,X_{\cdot \wedge \tau}, N)
\]
on \(\{N \leq \tau\}\).
Fix \(N \in \mathbb{N}\) and \(\varepsilon, \delta > 0\). Because \((P^{n, m})_{n \in \mathbb{N}}\) is tight, there exist \(K, M > 0\), which depend on \(m\), such that 
\begin{equation}\label{eq: tight1}\begin{split}
\limsup_{n \to \infty} P^n\Big(\sup_{t \in [0, N]} \|X_{t \wedge \rho_m}\| \geq K\Big) &\leq \frac{\varepsilon}{2},\\ 
\limsup_{n \to \infty} P^n\Big( w'(M, X_{\cdot \wedge \rho_m}, N) \geq \delta\Big) &\leq \frac{\varepsilon}{2}.
\end{split}
\end{equation}
Now, we have 
\begin{align*}
P^n \Big(\sup_{t \in [0, N]} \|X_t\| \geq K \Big) 
&\leq P^n \Big(\sup_{t \in [0, N]} \|X_{t \wedge \rho_m}\| \geq K  \Big) + P^n \Big(N > \rho_m \Big),
\\
P^n \Big(w'(M, X, N) \geq \delta\Big) &\leq P^n\Big( w'(M, X_{\cdot \wedge \rho_m}, N) \geq \delta\Big) + P^n\Big(N > \rho_m\Big).
\end{align*}
Thus, using \eqref{eq: tight1}, \((P^n)_{n \in \mathbb{N}}\) is tight if we can chose \(m > 0\) such that 
\[
\limsup_{n \to \infty} P^n\Big(N > \rho_m\Big) \leq \frac{\varepsilon}{2}.
\]
Of course, we would first determine \(m > 0\) and afterwards \(K, M > 0\). 

From this point on the strategies for the conditions from the Theorems \ref{theo:1} and \ref{theo:2} distinguish. To prove Theorem \ref{theo:1} we separate the big jumps, which is a step we do not require in the proof of Theorem \ref{theo:2}.
\subsubsection{Separation of the Big Jumps}\label{sec: sep jumps}
In this section we use ideas from the proof of \cite[Theorem 6.4.1]{liptser1989theory}.
We fix a constant \(a \in (0, \infty]\) which we determine later and \(m > \max(N, 2)\). Set 
\[
Y^a \triangleq \sum_{s \leq \cdot} \Delta X_s \1 \{\|\Delta X_s\| > a\},\quad X^a \triangleq X - Y^a.
\]
Because \(X\) has \cadlag paths, \(\1 \{\|\Delta X_s\| > a\} = 1\) only for finitely many \(s \in [0, t]\). Thus, \(Y^a\) is well-defined.   
Note that for two non-negative random variables \(U\) and \(V\) we have 
\[
P(U + V \geq  2\epsilon)
\leq P(U \geq \epsilon) + P(V \geq \epsilon).
\]
Hence, we obtain
\begin{align*}
P^n \Big(N > \rho_m\Big) 
&\leq P^n\Big(\sup_{s \in [0, N \wedge \rho_m]} \|Y^a_s\|  \geq \frac{m}{2}\Big) + P^n \Big(\sup_{s \in [0, N \wedge \rho_m]} \|X^a_s\|  \geq \frac{m}{2}\Big).
\end{align*}
Clearly, \(\sup_{s \in [0, N \wedge \rho_m]} \|Y^a_s\|\) can only be larger than one in case that at least one jump with norm strictly larger than \(a\) happens before time \(N \wedge \rho_m\), i.e.
\[
\Big\{\sup_{s \in [0, N \wedge \rho_m]} \|Y^a_s\|  \geq 1\Big\} \subseteq \Big\{ \sum_{s \in [0, N \wedge \rho_m]} \1 \{\|\Delta X_s\| > a\}  \geq 1\Big\}.
\]
Thus, we deduce from Lenglart's domination property, see \cite[Lemma I.3.30]{JS}, and Chebyshev's inequality that for all \(\epsilon > 0\)
\begin{align*}
P^n\Big(\sup_{s \in [0, N \wedge \rho_m]} \|Y^a_s\|  \geq 1\Big) &\leq P^n \Big(\sum_{s \in [0, N \wedge \rho_m]} \1 \{\|\Delta X_s\| > a\} \geq 1\Big)
\\&\leq \frac{\epsilon}{7} + P^n \Big(\int_0^{N \wedge \rho_m} K^n_s (\{x \in \mathbb{R}^d \colon \|x\| > a\}) \dd A_s \geq \frac{\epsilon}{7} \Big)
\\&\leq \frac{\epsilon}{7} + \frac{7A_N}{\epsilon} \sup_{s \in [0, N]} \sup_{\omega \in \Omega} K_s (\omega; \{x \in \mathbb{R}^d\colon \|x\| > a\}).
\end{align*}
 In case Condition \ref{cond: gl bjp} is assumed (i.e. in the case of Theorem \ref{theo:1}), we can choose \(a \in ( \theta, \infty)\) independent of \(n\) and \(m\) such that 
\begin{align}\label{eq: choose a}
P^n\Big(\sup_{s \in [0, N \wedge \rho_m]} \|Y^a_s\|  \geq \frac{m}{2}\Big) \leq P^n\Big(\sup_{s \in [0, N \wedge \rho_m]} \|Y^a_s\|  \geq 1\Big) \leq \frac{\varepsilon}{6}.
\end{align}
In case Condition \ref{cond: gl bjp} is not assumed to hold (i.e. in the case of Theorem \ref{theo:2}) we choose \(a \equiv \infty\). Because \(\|Y^\infty\| = 0\), in this case we clearly have 
\[
P^n\Big(\sup_{s \in [0, N \wedge \rho_m]} \|Y^a_s\|  \geq \frac{m}{2}\Big) = P^n\Big(\sup_{s \in [0, N \wedge \rho_m]} \|Y^\infty_s\|  \geq \frac{m}{2}\Big) = 0.
\]
These choices for \(a\) stay fix from now on.
Set 
\[
\zeta_m \triangleq \inf \big(t \in \mathbb{R}_+ \colon \|Y^a_{t \wedge \rho_m}\| > 1\big).
\]
We note that
\begin{align*}
P^n \Big(\sup_{s \in [0, N \wedge \rho_m]} \|X^a_s\| \geq \frac{m}{2}\Big) 
&\leq P^n \Big(\sup_{s \in [0, N \wedge \rho_m \wedge \zeta_m]} \|X^a_s\| \geq \frac{m}{2}\Big) +  P^n \Big(N >\zeta_m\Big)
\\&\leq P^n \Big(\sup_{s \in [0, N \wedge \rho_m \wedge \zeta_m]} \|X^a_s\| \geq \frac{m}{2}\Big) +  \frac{\varepsilon}{6}.
\end{align*}
Consequently, it suffices to choose \(m\) such that 
\[
\limsup_{n \to \infty} P^n \Big(\sup_{s \in [0, N \wedge \rho_m \wedge \zeta_m]} \|X^a_s\| \geq \frac{m}{2}\Big) \leq \frac{\varepsilon}{6}.
\]
\begin{remark}\label{rem: replace}
	We explain the claim from Remark \ref{rem: replace bjc}.
	As in Section \ref{sec: tight help seq}, it follows from \cite[Fact 2.9, Theorem 2.17]{10.2307/2297861} that the integral \[\int_0^N \sup_{\omega \in \Omega} K_s (\omega; \{x \in \mathbb{R}^d\colon \|x\| > a\}) \dd A_s\] is well-defined. In case the global boundedness condition \eqref{eq: unif j bound} holds, the weakend global big jump condition \eqref{eq: weaker gl bjc} and the dominated convergence theorem yield that 
	\[
	\int_0^N \sup_{\omega \in \Omega} K_s (\omega; \{x \in \mathbb{R}^d\colon \|x\| > a\}) \dd A_s \to 0 \text{ with } a \nearrow \infty.
	\]
	Consequently, if \eqref{eq: unif j bound} and \eqref{eq: weaker gl bjc} hold we can choose \(a \in (\theta, \infty)\) such that \eqref{eq: choose a} holds.
\end{remark}
\subsubsection{Non-Explosion under the Lyapunov Conditions}\label{sec: nonexp Ly}
In this section we assume that either the Conditions \ref{cond: gl bjp} and \ref{cond: Ly1} hold or that Condition \ref{cond: Ly2} holds.

In case \(a < \infty\) we deduce from \cite[Theorem II.2.21, Proposition II.2.24]{JS} that the process \(X^a\) is a \(P^n\)-semimartingale with local characteristics \((b^{n, a}, c^n, K^{n, a}; A)\) corresponding to the truncation function \(x \1 \{\|x\| \leq a\}\), where 
\[
b^{n, a}_t \triangleq \phi^n(X^*_t)\1 \{t \leq n + 1\} b^a_t,\quad K^{n, a}_t(\dd x) \triangleq \1 \{\|x\| \leq a\} K^n_t(\dd x), \quad t \in \mathbb{R}_+.
\]
From now on we assume that Condition \ref{cond: Ly2} holds. In case the Conditions \ref{cond: gl bjp} and \ref{cond: Ly1} hold it suffices to replace \(\gamma, V, \beta, \mathcal{L}\) and \(X\) in the following argument by \(\gamma_a, V_a, \beta_a, \mathcal{L}_a\) and \(X^a\).
Set 
\[
Z \triangleq e^{- \int_0^\cdot \gamma(s)\dd A_s} V(X)
\]
and
\[
Y \triangleq Z + \int_0^\cdot e^{- \int_0^s \gamma(u)\dd A_u}\big(\gamma(s) V(X_{s-}) -  (\mathcal{L} V)(s) \phi^n(X^*_{s})\1 \{s \leq n + 1\}\big)\dd A_s.
\]
Because we assume \eqref{eq: ito int} (see \eqref{eq: Tay bdd} for the case where Condition \ref{cond: Ly1} holds), we can deduce from Ito's formula (see, e.g., \cite[Theorem I.4.57]{JS}) and \cite[Lemma I.3.10, Proposition II.1.28]{JS} that \(Y\) is a local \(P^n\)-martingale.
For all \((t, \omega) \in \mathbb{R}_+ \times \Omega\) we have 
\begin{align*}
\int_0^t \1 \{\gamma(s) V(\omega(s-)) <  (\mathcal{L} V)(\omega; s) \phi^n(X^*_{s} (\omega))\1\{s \leq n + 1\}\} \dd A_s =  0, 
\end{align*}
by Condition \ref{cond: Ly2}.
Thus, \(Y \geq Z \geq 0\), which implies that \(Y\) is a non-negative local \(P^n\)-martingale and hence a \(P^n\)-supermartingale by Fatou's lemma.
Because \(\beta\) is increasing with \(\beta(m) \nearrow \infty\) as \(m \to \infty\), we find an \(m > \max(N,2)\) such that 
\[
\beta(k) \geq e^{\int_0^N \gamma(s)\dd A_s} \frac{6V(z)}{\varepsilon}
\]
for all \(k \geq \frac{m}{2}\).
Using that for all \(t \in [0, N]\) 
\[
Y_t \geq Z_t \geq e^{- \int_0^N \gamma(s)\dd A_s} V(X_t) \geq e^{- \int_0^N \gamma(s)\dd A_s} \beta(\|X_t\|), 
\]
we deduce from the supermartingale inequality (see, e.g., \cite[Theorem 1.3.8 (ii)]{KaraShre}) that
\begin{align*}
P^n \Big(\sup_{s \in [0, N]} \|X_s\| \geq \frac{m}{2}\Big) &\leq 
P^n \Big( \sup_{s \in [0, N]} \beta(\|X_s\|) \geq e^{ \int_0^N \gamma(s) \dd A_s} \frac{6 V(z)}{\varepsilon} \Big) 
\\&\leq P^n \Big( \sup_{s \in [0, N]} Y_s \geq \frac{6 V(z)}{\varepsilon} \Big) \leq \frac{\varepsilon V(z)}{6 V(z)} = \frac{\varepsilon}{6}.
\end{align*}
We conclude that \((P^n)_{n \in \mathbb{N}}\) is tight.

\subsubsection{Non-Explosion under Conditions \ref{cond: gl bjp} and \ref{cond: LG1}}\label{sec: LG1}
In this section we assume that the Conditions \ref{cond: gl bjp} and \ref{cond: LG1} hold. 
We use an argument based on Gronwall's lemma. 

Fix \(T > N\) and set 
\[
M^a \triangleq X^a - \int_0^\cdot b^{n, a}_s \dd A_s - X_0.
\]
Due to \cite[Theorem II.2.21, Proposition II.2.24]{JS} the process \(M^a\) is a square-integrable local \(P^n\)-martingale with predictable quadratic variation process
\[
\lle M^a, M^a\rre = \int_0^\cdot \widetilde{c}^{n, a}_s\dd A_s,
\]
where
\[
\widetilde{c}^{n, a}_t \triangleq \phi^n(X^*_t) \1 \{t \leq n + 1\} \widetilde{c}^{a}_t, \quad t \in \mathbb{R}_+.
\]
Thus, using Doob's inequality (see, e.g., \cite[Theorem I.1.43]{JS}), we obtain
\begin{equation}\label{eq: GW1}
\begin{split}
E^{P^n} \Big[ &\sup_{s \in [0, N \wedge \rho_m \wedge \zeta_m]} \|M^a_s\|^2 \Big] \\&\leq 4 E^{P^n} \Big[  \int_0^{N \wedge \rho_m \wedge \zeta_m} \| \widetilde{c}_s^{n, a}\| \dd A_s \Big] 
\\&\leq 4 \int_0^T \gamma_a(s) \dd A_s + 4 \int_0^N \gamma_a(s) E^{P^n} \Big[\sup_{t \in [0, s \wedge \rho_m \wedge \zeta_m]} \|X_{t-}\|^2 \Big] \dd A_s.
\end{split}
\end{equation}
H\"older's inequality yields that 
\begin{equation}\label{eq: GW2}
\begin{split}
\sup_{t \in [0, N \wedge \rho_m \wedge \zeta_m]}&\Big\| \int_0^{t} b_s^{n, a} \dd A_s\Big\|^2 \\&\leq A_T \int_0^{N \wedge \rho_m \wedge \zeta_m} \|b^{n, a}_s\|^2 \dd A_s \\&\leq A_T \int_0^T \gamma_a(s)\dd A_s + A_T \int_0^N \gamma_a(s)\sup_{t \in [0, s \wedge \rho_m \wedge \zeta_m]} \|X_{t-}\|^2 \dd A_s.
\end{split}\end{equation}
By the definition of \(\zeta_m\), we deduce from the inequality \((a_1 + a_2)^2 \leq 2( |a_1|^2 + |a_2|^2)\) that
\begin{align*}
\sup_{t \in [0, s \wedge \rho_m \wedge \zeta_m]} \|X_{t-}\|^2 &\leq 2\ \Big( \sup_{t \in [0, s \wedge \rho_m \wedge \zeta_m]} \|Y^a_{t-}\|^2 +  \sup_{t \in [0, s \wedge \rho_m \wedge \zeta_m]} \|X^a_{t-}\|^2\Big) \\&\leq 2\ \Big(1  + \sup_{t \in [0, s \wedge \rho_m \wedge \zeta_m]} \|X_{t-}^a\|^2\Big).
\end{align*}
Using the inequality \((a_1 + a_2 + a_3)^2 \leq 3 (|a_1|^2 + |a_2|^2 + |a_3|^2)\), we conclude that there exist a constant \(c^* > 0\)  and a \(\dd A_t\)-integrable Borel function \(\iota \colon [0, T] \to \mathbb{R}_+\), which only depend on \(z, T\) and \(\gamma_a\), such that
\begin{align*}
E^{P^n} \Big[ &\sup_{s \in [0, N \wedge \rho_m \wedge \zeta_m]} \|X^a_s\|^2 \Big] \leq c^* + \int_0^N  \iota (s)E^{P^n}\Big[\sup_{t \in [0, s \wedge \rho_m \wedge \zeta_m]} \|X^a_{t-}\|^2 \Big] \dd A_s.
\end{align*}
Applying the Gronwall-type lemma \cite[Theorem 2.4.3]{liptser1989theory} 
we obtain
\[
E^{P^n} \Big[ \sup_{s \in [0, N \wedge \rho_m \wedge \zeta_m]} \|X^a_s\|^2 \Big] \leq c^* e^{\int_0^N \iota(s)\dd A_s}.
\]
Chebyshev's inequality yields that 
\begin{align*}
\limsup_{n \to \infty} P^n \Big(\sup_{s \in [0, N \wedge \rho_m \wedge \zeta_m]} \|X^a_s\| \geq \frac{m}{2}\Big) 
\leq \frac{4 c^* e^{ \int_0^N \iota(s)\dd A_s}}{m^2}.
\end{align*} 
Consequently, we find \(m > \max(N, 2)\) such that 
\[
\limsup_{n \to \infty} P^n \Big(\sup_{s \in [0, N \wedge \rho_m \wedge \zeta_m]} \|X^a_s\| \geq \frac{m}{2}\Big) \leq \frac{\varepsilon}{6}
\]
and therefore we conclude that \((P^n)_{n \in \mathbb{N}}\) is tight.
\subsubsection{Non-Explosion under Condition \ref{cond: LG2}}\label{sec: LG2}
In this section we assume that Condition \ref{cond: LG2} holds. 
The argument is almost identical to the one given in Section \ref{sec: LG1}. The only difference is that we have an additional big jump term. 
Namely, we have
\[
X = X_0 + M + N + \int_0^\cdot b^n_s \dd A_s + \int_0^\cdot \int h'(x) K^n_s (\dd x)\dd A_s,
\]
where 
\begin{align*}
M &\triangleq X - \int_0^\cdot b^n_s\dd A_s - \sum_{s \leq \cdot} h'(\Delta X_s) - X_0, 
\\
N &\triangleq \sum_{s \leq \cdot} h'(\Delta X_s) - \int_0^\cdot \int h'(x) K^n_s (\dd x)\dd A_s.
\end{align*}
Here, \(h\) is the truncation function we fixed from the beginning and \(h'(x) = x - h(x)\). We note that \(\int_0^\cdot \int h'(x) K^n_s (\dd x)\dd A_s\) is well-defined due to Condition \ref{cond: LG2}.
Moreover, \cite[Proposition II.1.28, Theorem II.1.33]{JS} imply that \(N\)
is a square integrable local \(P^n\)-martingale with predictable quadratic variation process
\[
\lle N^i, N^i\rre = \int_0^{\cdot}\int |(h')^i(x)|^2 K^n_s (\dd x) \dd A_s,\quad i = 1, \dots, d.
\]
We deduce from Doob's inequality that 
\begin{align*}
E^{P^n} \Big[ \sup_{s \in [0, N \wedge \rho_m]} \|N_s\|^2 \Big] \leq 4 \int_0^T&\gamma(s) \dd A_s + 4 \int_0^N  \gamma(s) E^{P^n}\Big[\sup_{t \in [0, s \wedge \rho_m]} \|X_{t-}\|^2 \Big] \dd A_s.
\end{align*}
Furthermore, H\"older's inequality yields that
\begin{align*}
\sup_{t \in [0, N \wedge \rho_m]} \Big\| \ &\int_0^{t} \int h'(y) K^n_s(\dd y)\dd A_s \Big\|^2 \\&\leq \Big( \int_0^{N \wedge \rho_m} \int \|h'(y)\| K^n_s(\dd y)\dd A_s \Big)^2
\\&\leq \Big( \int_0^{N \wedge \rho_m}\gamma(s) \Big(1 + \sup_{t \in [0, s\wedge \rho_m]}\|X_{t-}\|^2\Big)^\frac{1}{2} \dd A_s \Big)^2 
\\&\leq \int^T_0 \gamma(s) \dd A_s \int_0^{N} \gamma(s)\Big(1 + \sup_{t \in [0, s\wedge \rho_m]}\|X_{t-}\|^2\Big) \dd A_s
\\&\leq \Big(\int^T_0 \gamma(s) \dd A_s\Big)^2 + \Big(\int_0^T \gamma(s)\dd A_s\Big) \int_0^{N} \gamma(s)\sup_{t \in [0, s\wedge \rho_m]}\|X_{t-}\|^2 \dd A_s.
\end{align*}
Using estimates similar to \eqref{eq: GW1} and \eqref{eq: GW2} and the Gronwall-type lemma \cite[Theorem 2.4.3]{liptser1989theory} yields that 
\[
E^{P^n} \Big[ \sup_{s \in [0, N \wedge \rho_m]} \|X_s\|^2 \Big] \leq c^* e^{\int_0^N \iota(s)\dd A_s}
\]
for a constant \(c^* > 0\) independent of \(n\) and \(m\) and a non-negative Borel function \(\iota\) independent of \(n\) and \(m\) such that \(\int_0^N \iota (s)\dd A_s < \infty\). Chebyshev's inequality completes the proof of the tightness of \((P^n)_{n \in \mathbb{N}}\).
\subsection{Martingale Problem Argument}\label{sec: MPA}
In this section we show that for every accumulation point of \((P^n)_{n \in \mathbb{N}}\) the coordinate process is a semimartingale with local characteristics \((b, c, K; A)\) and initial law \(\delta_z\). 

Let \(P\) be an accumulation point of \((P^n)_{n \in \mathbb{N}}\). Without loss of generality, we assume that \(P^n \to P\) weakly as \(n \to \infty\). Because \(\omega \mapsto \omega(0)\) is continuous, we clearly have \(P \circ X^{-1}_0 = \delta_z\). Set 
\[
\tau_m \triangleq \inf \big(t \in \mathbb{R}_+ \colon \|X_{t-}\| \geq m \text{ or } \|X_t\| \geq m\big),  \quad m > 0, 
\]
and for \(\alpha \in \Omega\) set
\begin{align*}
V(\alpha) &\triangleq \big\{m > 0 \colon \tau_m (\alpha) < \tau_{m + }(\alpha)\big\},\\
V'(\alpha) &\triangleq \big\{m > 0 \colon \Delta \alpha(\tau_m (\alpha)) \not = 0, \|\alpha(\tau_m(\alpha) -)\| = m\big\}.
\end{align*}
Finally, we define
\[
U \triangleq \Big\{m > 0 \colon P \big( \big\{ \omega \in \Omega \colon m \in  V (\omega) \cup V' (\omega)\big\}\big) = 0\Big\}.
\]

Fix \(m \in U\) and denote by \(P_{n, m}\) the law of \(X_{\cdot \wedge \tau_m}\) under \(P^n\) and by \(P_m\) the law of \(X_{\cdot \wedge \tau_m}\) under \(P\).
Due to \cite[Proposition VI.2.12]{JS} and the definition of \(U\), the map \(\omega \mapsto X_{\cdot \wedge \tau_m(\omega)}(\omega)\) is \(P\)-a.s. continuous. Thus, due to the continuous mapping theorem, we have \(P_{n, m} \to P_m\) weakly as \(n \to \infty\). 

Due to \cite[Lemma 2.3]{KMK}, the stopped coordinate process \(X_{\cdot \wedge \tau_m}\) is a \(P^n\)-semimartingale with local characteristics \((\1_{\of 0, \tau_m\gs} b^n, \1_{\of 0, \tau_m\gs} c^n, \1_{\of 0,\tau_m\gs} K^n; A)\). 

Next, we use \cite[Theorem IX.2.11]{JS} to conclude that the stopped coordinate process \(X_{\cdot \wedge \tau_m}\) is a \(P\)-semimartingale with local characteristics \((\1_{\of 0, \tau_m\gs} b, \1_{\of 0, \tau_m\gs} c, \1_{\of 0,\tau_m\gs} K; A)\). For reader's convenience we recall the prerequisites of \cite[Theorem IX.2.11]{JS}:
\begin{enumerate}
\item[(i)]
For all \(t \in \mathbb{R}_+\) and \(g \in C_1(\mathbb{R}^d)\) the maps \[\omega \mapsto \int_0^{t \wedge \tau_m(\omega)} b_s(\omega)\dd A_s, \int_0^{t \wedge \tau_m(\omega)} \widetilde{c}_s(\omega)\dd A_s, \int_0^{t \wedge \tau_m(\omega)} \int g(x) K_s(\omega; \dd x)\dd A_s\] are \(P\)-a.s. continuous.
\item[(ii)]
For all \(t \in \mathbb{R}_+\) and \(g \in C_1(\mathbb{R}^d)\)
\[
\sup_{\omega \in \Omega} \Big(\Big\| \int_0^{t \wedge \tau_m (\omega)} \widetilde{c}_s (\omega) \dd A_s \Big\| + \Big\| \int_0^{t \wedge \tau_m (\omega)} \int g(x) K_s(\omega; \dd  x) \dd A_s \Big\| \Big) < \infty.
\]
\item[(iii)] For all \((k, k^n) \in \{ (b, b^n), (\widetilde{c}, \widetilde{c}^n), (\int g(x) K (\dd x), \int g(x) K^n(\dd x)) \colon g \in C_1(\mathbb{R}^d)\}, t \in \mathbb{R}_+\) and \(\epsilon > 0\) it holds that 
\[
P^n \Big( \Big\| \int_0^{t \wedge \tau_m} (k_s - k^n_s) \dd A_s \Big\| > \epsilon \Big) \to 0 \text{ with } n \to \infty.
\]

\end{enumerate}

Due to the local majoration property (Condition \ref{cond: basic} (i)), the Skorokhod continuity property (Condition \ref{cond: basic} (ii)) and the fact that the map \(\omega \mapsto \tau_m (\omega)\) is \(P\)-a.s. continuous, because \(m \in U\) and \cite[Proposition VI.2.11]{JS}, part (i) holds due to \cite[IX.3.42]{JS}.

Part (ii) follows from the local majoration property (Condition \ref{cond: basic} (i)), because for each \(g \in C_1(\mathbb{R}^d)\) we find a constant \(c^* > 0\) such that \(g(x) \leq c^* (1 \wedge \|x\|^2)\) for all \(x \in \mathbb{R}^d\). 

It remains to explain that (iii) holds. Let \((k, k^n)\) be either \((b, b^n), (\widetilde{c}, \widetilde{c}^n)\) or \((\int g(x) K (\dd x), \int g(x) K^n(\dd x))\), where \(g \in C_1(\mathbb{R}^d)\).
Chebyshev's inequality yields that for all \(t \in \mathbb{R}_+\) and \(\varepsilon > 0\)
\begin{align*}
P^n \Big(\Big\| \int_0^{t \wedge \tau_{m}} (k_s - k^n_s) \dd A_s \Big\| > \varepsilon  \Big) &\leq \frac{1}{\varepsilon} E^{P^n} \Big[\Big\| \int_0^{t \wedge \tau_m} (k_s - k^n_s)\dd A_s\Big\| \Big] 
\\&\leq \frac{1}{\varepsilon} E^{P^n} \Big[ \int_0^{t \wedge \tau_m} \|k_s\| (1 - \phi^n(X^*_{s})) \dd A_s \Big] 
\\&\leq \frac{A_t \sup_{(s, \omega) \in \Theta_{m \vee t}} \|k_s(\omega)\|}{\varepsilon}  \sup_{|x| \leq m} (1 - \phi^n(x))  \to 0 
\end{align*}
with \(n \to \infty\). We conclude that (iii) holds.

In summary, we deduce from \cite[Theorem IX.2.11]{JS} and \cite[Lemma 2.3]{KMK} that the stopped coordinate process \(X_{\cdot \wedge \tau_m}\) is a \(P_m\)-semimartingale with local  characteristics \((\1_{\of 0, \tau_m\gs} b, \1_{\of 0, \tau_m\gs} c, \1_{\of 0,\tau_m\gs} K; A)\). 

Next, we explain that this implies that the stopped coordinate process \(X_{\cdot \wedge \tau_m}\) is also a \(P\)-semimartingale with local  characteristics \((\1_{\of 0, \tau_m\gs} b, \1_{\of 0, \tau_m\gs} c, \1_{\of 0,\tau_m\gs} K; A)\). 
Due to \cite[Theorem II.2.42]{JS} the following are  equivalent:
\begin{enumerate}
	\item[\textup{(i)}] The stopped coordinate process \(X_{\cdot \wedge \tau_m}\) is a \(P\)-semimartingale with local characteristics \((\1_{\of 0, \tau_m\gs} b, \1_{\of 0, \tau_m\gs} c, \1_{\of 0,\tau_m\gs} K; A)\).
	\item[\textup{(ii)}]
	For all bounded \(f \in C^2(\mathbb{R}^d)\) the process 
	\begin{align}\label{eq: MP}
	M^f \triangleq f(X_{\cdot \wedge \tau_m}) - f(X_0) - \int_0^{\cdot \wedge \tau_m} ( \mathcal{L} f)(s)\dd A_s
	\end{align}
	is a local \(P\)-martingale. 
\end{enumerate}
Fix a bounded \(f \in C^2(\mathbb{R}^d)\) and let \(M^f\) be as in \eqref{eq: MP}.
The local majoration property (Condition \ref{cond: basic} (i)) yields that \(M^f\) is bounded on finite time intervals and therefore a martingale whenever it is a local martingale. 
Because \(X_{\cdot \wedge \tau_m}\) is a \(P_m\)-semimartingale with local  characteristics \((\1_{\of 0, \tau_m\gs} b, \1_{\of 0, \tau_m\gs} c, \1_{\of 0,\tau_m\gs} K; A)\), \cite[Theorem II.2.42]{JS} implies that the process \(M^f\) is a \(P_m\)-martingale. Let \(\rho\) be a bounded \((\mathcal{F}^o_t)_{t \geq 0}\)-stopping time.
Due to \cite[Lemma III.2.43]{JS} we have \(M^f_\rho \circ X_{\cdot \wedge \tau_m} = M^f_\rho\). 
Thus, the optional stopping theorem yields that 
\begin{align}\label{eq: st}
E^{P} \big[ M^f_\rho\big] = E^{P_m} \big[M^f_\rho\big] = 0.
\end{align}
Because predictable processes are \((\mathcal{F}_{t-})_{t \geq 0}\)-adapted, see \cite[Proposition I.2.4]{JS}, and \(\mathcal{F}_{t - } \subseteq \mathcal{F}_t^o\) for \(t > 0\), see \cite[p. 159]{JS}, we conclude that \(M^f\) is \((\mathcal{F}^o_t)_{t \geq 0}\)-adapted. Hence, \eqref{eq: st} and \cite[Proposition II.1.4]{RY} yield that \(M^f\) is a \(P\)-martingale for the filtration \((\mathcal{F}^o_t)_{t \geq 0}\). Finally, the backward martingale convergence theorem yields that \(M^f\) is a \(P\)-martingale for the right-continuous filtration \((\mathcal{F}_t)_{t \geq 0}\), too.
We conclude that the stopped coordinate process \(X_{\cdot \wedge \tau_m}\) is a \(P\)-semimartingale with local characteristics \((\1_{\of 0, \tau_m\gs} b, \1_{\of 0, \tau_m\gs} c, \1_{\of 0,\tau_m\gs} K; A)\).

Recall that \(m \in U\) was arbitrary. 
As in the proof of \cite[Proposition IX.1.17]{JS} we see that the complement of \(U\) is at most countable. Consequently, we find  a sequence \((m_k)_{k \in \mathbb{N}}  \subset U\) such that \(m_k \nearrow \infty\) as \(k \to \infty\). In particular, we have \(\tau_{m_k} \nearrow \infty\) as \(k \to \infty\). It follows now from \cite[Theorem II.2.42]{JS} that the coordinate process is a \(P\)-semimartingale with local characteristics \((b, c, K; A)\).
The proof of the Theorems \ref{theo:1} and \ref{theo:2} is complete. \qed

\begin{comment}
The proof of \cite[Theorem IX.2.11]{JS} completely relies on the martingale problem method, i.e. certain processes are identified to be local martingales, which implies the conclusion due to \cite[Theorem II.2.21]{JS}. 
\end{comment}

\section*{Acknowledgements}
The author is grateful to the anonymous referees for many valuable comments which helped to improve the paper.
\appendix
\section{Proof of Proposition \ref{prop: ex}}\label{sec:p2}
We first introduce a martingale problem for semimartingales. 
Let \(\mathscr{C}^+ (\mathbb{R}^d)\) be a countable sequence of test functions as defined in \cite[II.2.20]{JS}. In particular,  any function in \(\mathscr{C}^+ (\mathbb{R}^d)\) is bounded and vanishes around the origin. 
We set \begin{align*}X(h) & \triangleq X - \sum_{s \leq \cdot} (\Delta X_s - h(\Delta  X_s)),\\
M(h) &\triangleq X(h) - \int_0^\cdot b_s \dd A_s - X_0,
\end{align*}
where \(h\) is a truncation function.
Let \(\mathfrak{X}\) be the set of the following processes:
\begin{enumerate}
	\item[(i)] \(M^i(h)\) for \(i = 1, \dots, d\).
	\item[(ii)] \(M^i(h) M^j(h) - \int_0^{\cdot} \widetilde{c}^{ij}_{s} \dd A_s\) for \(i,j = 1, \dots, d\).
	\item[(iii)] \(\sum_{s \leq \cdot} g(\Delta X_s) - \int_0^{\cdot}\int g(x) K_s(\dd  x)\dd A_s\) for \(g \in \mathscr{C}^+(\mathbb{R}^d)\).
\end{enumerate}
For \(n \in \mathbb{N}\) and a \cadlag process \(Y\) we set \[
\tau^Y_n \triangleq \inf \big(t \in \mathbb{R}_+ \colon |Y_{t-}| \geq n \text{ or } |Y_{t}| \geq n\big).
\] 
Moreover, we define
\begin{align*}
\tau^i_n &\triangleq \tau^{Y}_n \text{ with } Y = M^i(h), \\
\tau^{ij}_n &\triangleq \tau^Y_n \text{ with } Y = M^i(h) M^j(h) - \int_0^{\cdot} \widetilde{c}^{ij}_{s} \dd A_s,\\
\tau^{g}_n &\triangleq \tau^Y_n \text{ with } Y = \sum_{s \leq \cdot} g(\Delta X_s) - \int_0^{\cdot}\int g(x) K_s(\dd  x)\dd A_s.
\end{align*}
Let \(\mathfrak{X}_\textup{loc}\)
be the set of the following processes:
\begin{enumerate}
	\item[(i)] \(M^i(h)_{\cdot \wedge \tau^i_n}\) for \(i = 1, \dots, d\) and \(n \in \mathbb{N}\).
	\item[(ii)] \(\big(M^i(h) M^j(h) - \int_0^{\cdot} \widetilde{c}^{ij}_{s} \dd A_s\big)_{\cdot \wedge \tau^i_n \wedge \tau^j_n \wedge \tau^{ij}_n}\) for \(i,j = 1, \dots, d\) and \(n \in \mathbb{N}\).
	\item[(iii)] \(\big(\sum_{s \leq \cdot} g(\Delta X_s) - \int_0^{\cdot}\int g(x) K_s(\dd  x)\dd A_s\big)_{\cdot \wedge \tau^g_n}\) for \(g \in \mathscr{C}^+(\mathbb{R}^d)\) and \(n \in \mathbb{N}\).
\end{enumerate}
We stress that the set \(\mathfrak{X}_\textup{loc}\) is countable.

Due to \cite[Theorem II.2.21]{JS}, \(X\) is a \(P\)-semimartingale with local characterisics \((b, c, K; A)\) and initial law \(\eta\) if, and only if, \(P \circ X^{-1}_0 = \eta\) and all processes in \(\mathfrak{X}\) (or, equivalently, all processes in \(\mathfrak{X}_\textup{loc}\)) are local \(P\)-martingales.

For a bounded function \(f \colon \mathbb{R}^d \to \mathbb{R}^n\) we set \(\|f\|_\infty \triangleq \sup_{x \in \mathbb{R}^d} \|f(x)\|\).
We note that for any \(g \in \mathscr{C}^+(\mathbb{R}^d)\) 
\begin{align*}
|\Delta M^i (h)| + \Big|\Delta \Big( \sum_{s \leq \cdot} g(\Delta X_s) - \int_0^{\cdot}&\int g(x) K_s(\dd  x)\dd A_s\Big)\Big| + \Big| \Delta \Big(\int_0^\cdot \widetilde{c}^{ij}_s\dd A_s\Big)\Big|
\\&\leq 2 \|h^i\|_\infty + 2 \|g\|_\infty +  \|h^i h^j\|_\infty + \|h^i\|_\infty \|h^j\|_\infty, 
\end{align*}
see \cite[II.2.11, Proposition II.2.17]{JS}.
Furthermore, we note that for all \(t \leq \tau^{i}_n \wedge \tau^j_n\)
\begin{align*}
\big| \Delta \big(M^i(h) M^j(h)\big)_{t}\big| &= \big| \Delta M^i(h)_t\Delta M^j (h)_t + M^i(h)_{t-} \Delta M^j(h)_t + M^j(h)_{t-} \Delta M^i(h)_t\big| \\&\leq 
4\|h^i\|_\infty \|h^j\|_\infty + 2n \big(\|h^j\|_\infty + \|h^i\|_\infty\big).
\end{align*}
Hence, because for all \(t \in \mathbb{R}_+\) we have 
\[
\big|Y_{t \wedge \tau^Y_n}\big| \leq n + \big|\Delta Y_{t \wedge \tau^Y_n}\big|, 
\]
we conclude that all processes in \(\mathfrak{X}_\textup{loc}\) are bounded and therefore martingales whenever they are local martingales. Furthermore, because predictable processes are \((\mathcal{F}_{t-})_{t \geq 0}\)-adapted, see \cite[Proposition I.2.4]{JS}, and \(\mathcal{F}_{t - } \subseteq \mathcal{F}_t^o\) for \(t > 0\), see \cite[p. 159]{JS}, all processes in \(\mathfrak{X}\) are \((\mathcal{F}^o_t)_{t \geq 0}\)-adapted.
Because, due to \cite[Proposition 2.1.5]{EK}, the random time \(\tau^Y_n\) is an \((\mathcal{F}^o_t)_{t \geq 0}\)-stopping time whenever \(Y\) is \((\mathcal{F}^o_t)_{t \geq 0}\)-adapted, all processes in \(\mathfrak{X}_\textup{loc}\) are \((\mathcal{F}_t)_{t \geq 0}\)-martingales if, and only if, they are \((\mathcal{F}^o_t)_{t \geq 0}\)-martingales. Here, the implication \(\Rightarrow\) follows from the tower rule and the implication \(\Leftarrow\) follows from the backward martingale convergence theorem. 

In summary, we proved the following:
\begin{lemma}\label{lem: mp}
	For a probability measure \(P\) on \((\Omega, \mathcal{F})\) the coordinate process \(X\) is a \(P\)-semimartingale with local characteristics \((b, c, K; A)\) and initial law \(\eta\) if, and only if, \(P \circ X^{-1}_0 = \eta\) and all processes in \(\mathfrak{X}_\textup{loc}\) are \(P\)-martingales for the filtration \((\mathcal{F}^o_t)_{t \geq 0}\).
\end{lemma}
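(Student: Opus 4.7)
The plan is to prove the lemma by producing an equivalent characterization of the semimartingale property via the standard martingale problem in \cite[Theorem II.2.21]{JS}, then transferring that characterization from the right-continuous filtration \((\mathcal{F}_t)_{t \geq 0}\) with local martingales to the natural filtration \((\mathcal{F}^o_t)_{t \geq 0}\) with bounded martingales. Concretely, I would first invoke \cite[Theorem II.2.21]{JS} to assert that \(X\) is a \(P\)-semimartingale with local characteristics \((b,c,K;A)\) and initial law \(\eta\) if and only if \(P\circ X_0^{-1}=\eta\) and every process in \(\mathfrak{X}\) is a local \(P\)-martingale for \((\mathcal{F}_t)_{t\geq 0}\). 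The rest of the argument is then to identify these two conditions with the corresponding condition for \(\mathfrak{X}_{\textup{loc}}\) and \((\mathcal{F}^o_t)_{t\geq 0}\).

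Next, I would check that every element of \(\mathfrak{X}_{\textup{loc}}\) is uniformly bounded. The three jump estimates computed in the text (using \cite[II.2.11, Proposition II.2.17]{JS} together with the boundedness of \(h\) and of each \(g\in\mathscr{C}^+(\mathbb{R}^d)\)) bound the jumps of all processes in \(\mathfrak{X}\); the additional bound for \(\Delta(M^i(h)M^j(h))\) on \(\of 0, \tau^i_n\wedge \tau^j_n\gs\) comes from the product rule for jumps and the definition of \(\tau^i_n\). Combining these with the elementary inequality \(|Y_{t\wedge \tau^Y_n}|\leq n+|\Delta Y_{t\wedge\tau^Y_n}|\) shows each \(Z\in\mathfrak{X}_{\textup{loc}}\) is bounded, so \(Z\) is a martingale iff it is a local martingale. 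Since the stopping times \(\tau^i_n,\tau^{ij}_n,\tau^g_n\) all tend to infinity \(P\)-a.s. as \(n\to\infty\) (because \(X\) is \cadlag and the processes in \(\mathfrak{X}\) have \cadlag paths), the local martingale property of every process in \(\mathfrak{X}\) is equivalent to the martingale property of every process in \(\mathfrak{X}_{\textup{loc}}\).

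It remains to swap the filtration. Since \(b,c,K\) and \(A\) are predictable and predictable processes are \((\mathcal{F}_{t-})_{t\geq 0}\)-adapted by \cite[Proposition I.2.4]{JS}, and \(\mathcal{F}_{t-}\subseteq \mathcal{F}^o_t\) for \(t>0\) by \cite[p.\ 159]{JS}, each process in \(\mathfrak{X}\) is \((\mathcal{F}^o_t)_{t\geq 0}\)-adapted; by \cite[Proposition 2.1.5]{EK} the hitting times \(\tau^Y_n\) are then \((\mathcal{F}^o_t)_{t\geq 0}\)-stopping times, so every element of \(\mathfrak{X}_{\textup{loc}}\) is \((\mathcal{F}^o_t)_{t\geq 0}\)-adapted. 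For the \((\mathcal{F}_t)\Rightarrow(\mathcal{F}^o_t)\) direction, condition on \(\mathcal{F}^o_s\subseteq \mathcal{F}_s\) and apply the tower property. For the converse, use that \(\mathcal{F}_t=\bigcap_{s>t}\mathcal{F}^o_s\) together with the backward martingale convergence theorem applied to the reverse martingale \((E^P[Z_u\mid \mathcal{F}^o_s])_{s\downarrow t}\), so that an \((\mathcal{F}^o_t)\)-martingale \(Z\) satisfies \(E^P[Z_u\mid \mathcal{F}_t]=\lim_{s\downarrow t}E^P[Z_u\mid \mathcal{F}^o_s]=Z_t\) \(P\)-a.s.

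The only nontrivial technical point, and the step I would be most careful with, is the filtration switch in the \((\mathcal{F}^o_t)\Rightarrow(\mathcal{F}_t)\) direction: one must justify that the conditional expectations along the decreasing net \(s\downarrow t\) converge \(P\)-a.s.\ (and in \(L^1\)) to the conditional expectation with respect to the intersection \(\sigma\)-field, which is exactly the content of L\'evy's downward theorem and requires the uniform integrability that our explicit bounds on \(\mathfrak{X}_{\textup{loc}}\) already supply. Once this is in hand, combining the two directions yields the claimed equivalence.
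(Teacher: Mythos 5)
Your proposal is correct and follows essentially the same route as the paper: invoke \cite[Theorem II.2.21]{JS} to reduce to the local martingale property of \(\mathfrak{X}\) (or \(\mathfrak{X}_{\textup{loc}}\)), use the explicit jump bounds plus \(|Y_{t\wedge\tau^Y_n}|\leq n+|\Delta Y_{t\wedge\tau^Y_n}|\) to show boundedness and thereby upgrade local martingales to martingales, and then transfer the martingale property between \((\mathcal{F}_t)\) and \((\mathcal{F}^o_t)\) via the tower rule in one direction and the backward martingale convergence theorem together with right-continuity of the paths in the other. The details you flag (predictability giving \((\mathcal{F}^o_t)\)-adaptedness, \cite[Proposition 2.1.5]{EK} for the hitting times, L\'evy's downward theorem) are precisely the ingredients the paper uses.
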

With this observation at hand we are in the position to prove Proposition \ref{prop: ex}  along the lines of the proof of \cite[Proposition 2]{10.2307/2244838}.

Let \(\mathcal{P}\) be the set of all probability measures \(P\) on \((\Omega,  \mathcal{F})\)  such that the coordinate process is a \(P\)-semimartingale with local characteristics \((b,  c, K; A)\) and initial law \(\delta_z\) for some \(z \in \mathbb{R}^d\).
We consider \(\mathcal{P}\) as a subspace of the Polish space \(\mathscr{P}\) of probability measures on \((\Omega, \mathcal{F})\) equipped with the topology of convergence in distribution. We note that the space \(\mathcal{P}\) is separable and metrizable. 
\begin{lemma}\label{lem: P Borel}
	The set \(\mathcal{P}\) is a Borel subset of \(\mathscr{P}\).
\end{lemma}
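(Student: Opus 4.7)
The plan is to use the martingale-problem reformulation just established in Lemma \ref{lem: mp}. It characterizes membership in $\mathcal{P}$ as the conjunction of two conditions: (a) the initial law $P \circ X_0^{-1}$ is a Dirac measure on $\mathbb{R}^d$, and (b) each of the countably many processes in $\mathfrak{X}_\textup{loc}$ is a $P$-martingale for the natural filtration $(\mathcal{F}^o_t)_{t \geq 0}$. Since $\mathfrak{X}_\textup{loc}$ is countable, it suffices to show that each of these conditions carves out a Borel subset of $\mathscr{P}$.

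For condition (a), the subset $\mathcal{D} \subset \mathscr{P}$ of measures with Dirac one-dimensional marginal at time $0$ admits the characterization
\[
\mathcal{D} = \Big\{P \in \mathscr{P}\colon E^P[f(X_0)^2] = (E^P[f(X_0)])^2 \text{ for all } f \in C_b(\mathbb{R}^d)\Big\}.
\]
Because $\omega \mapsto f(\omega(0))$ is bounded and Skorokhod-continuous for every $f \in C_b(\mathbb{R}^d)$, the maps $P \mapsto E^P[f(X_0)]$ and $P \mapsto E^P[f(X_0)^2]$ are continuous on $\mathscr{P}$. Choosing a countable family $\{f_n\} \subset C_b(\mathbb{R}^d)$ whose associated moment equalities characterize Dirac measures on $\mathbb{R}^d$ expresses $\mathcal{D}$ as a countable intersection of closed sets, hence $\mathcal{D}$ is closed in $\mathscr{P}$.

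For condition (b), fix $M \in \mathfrak{X}_\textup{loc}$. As already shown, $M$ is bounded, \cadlag, and $(\mathcal{F}^o_t)_{t \geq 0}$-adapted. It is a $P$-martingale for $(\mathcal{F}^o_t)_{t \geq 0}$ if and only if
\[
E^P\big[(M_t - M_s)\,f_1(X_{t_1}) \cdots f_k(X_{t_k})\big] = 0
\]
for all finite choices of rationals $0 \leq t_1 \leq \dots \leq t_k \leq s \leq t$, all $k \in \mathbb{N}$, and all $f_i$ in a fixed countable dense subset of $C_b(\mathbb{R}^d)$. For each such choice the integrand is a bounded Borel function on $\Omega$. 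The standard monotone class argument, starting from bounded continuous functions where $P \mapsto E^P[g]$ is continuous, shows that $P \mapsto E^P[g]$ is Borel on $\mathscr{P}$ for every bounded Borel $g$. Consequently the zero set of the above expression is Borel in $\mathscr{P}$, and the set
\[
\mathcal{M}_M \triangleq \{P \in \mathscr{P}\colon M \text{ is a } P\text{-martingale for } (\mathcal{F}^o_t)_{t \geq 0}\}
\]
is a countable intersection of such Borel zero sets, hence Borel.

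Combining, $\mathcal{P} = \mathcal{D} \cap \bigcap_{M \in \mathfrak{X}_\textup{loc}} \mathcal{M}_M$ is the intersection of a closed set with countably many Borel sets, and is therefore Borel. The only nontrivial point is the Borel measurability of $P \mapsto E^P[g]$ for bounded Borel $g$, which is standard but must be invoked because the integrands arising from the jump terms in $\mathfrak{X}_\textup{loc}$ are in general only Borel rather than Skorokhod-continuous in $\omega$.
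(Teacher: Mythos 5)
Your argument is correct and follows essentially the same route as the paper's: both invoke Lemma \ref{lem: mp} to reduce membership in $\mathcal{P}$ to a Dirac condition on the initial law together with countably many martingale identities for the processes in $\mathfrak{X}_\textup{loc}$, and both reduce the martingale conditions to countably many Borel constraints on $P\mapsto E^P[\cdot]$. The only cosmetic difference is in the Dirac part: you characterize Dirac initial laws via vanishing variances $E^P[f(X_0)^2]=(E^P[f(X_0)])^2$ for a countable family in $C_b(\mathbb{R}^d)$ (yielding a closed set directly), whereas the paper cites a general fact that $\{\delta_x : x\in\mathbb{R}^d\}$ is Borel in $\mathscr{P}(\mathbb{R}^d)$ and composes with the continuous map $P\mapsto P\circ X_0^{-1}$.
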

\begin{proof}
	Let \(I \triangleq \{P \in \mathscr{P} \colon P \circ X_0^{-1} \in \{\delta_x, x \in \mathbb{R}^d\}\}\) and let \(J\) be the set of all \(P \in \mathscr{P}\) such that
	\begin{align}\label{eq: mb mp}
	E^P \big[ \big(Y_t - Y_s\big)\1_G\big] = 0, 
	\end{align}
	for all \(Y \in \mathfrak{X}_\textup{loc}, 0 \leq s < t < \infty\) and \(G \in \mathcal{F}^o_s\). 
	In \eqref{eq: mb mp} we can restrict ourselves to rational \(0 \leq s < t < \infty\) because of the right-continuity of \(Y\). Furthermore, the \(\sigma\)-field \(\mathcal{F}^o_s = \sigma(X_{r}, r \in [0, s]\cap \mathbb{Q}_+)\) is countable generated, i.e. contains a countable determining class. Thus, in \eqref{eq: mb mp} it also suffices to take only countably many sets from \(\mathcal{F}^o_s\) into consideration. 
	We conclude that \(J\) is Borel due to \cite[Theorem 15.13]{aliprantis2013infinite}. Due to \cite[Theorem 8.3.7]{cohn13} the set \(\{\delta_x, x \in \mathbb{R}^d\}\) is Borel. Thus, since \(P \mapsto P \circ X_0^{-1}\) is continuous, we also conclude that \(I\) is Borel. In view of Lemma \ref{lem: mp}, it follows that \(\mathcal{P} = I \cap J\) is Borel.
\end{proof}
In view of \cite[Theorem A.1.6]{Kallenberg}, the previous lemma implies that \(\mathcal{P}\) is a Borel space in the sense of \cite[p. 456]{Kallenberg}.
Let \(\Phi\colon \mathcal{P} \to \mathbb{R}^d\) be the map such that \(\Phi (P)\) is the starting point associated to \(P \in \mathcal{P}\). 
We claim that \(\Phi\) is continuous and therefore Borel. 
To see this let \((P^n)_{n \in \mathbb{N}}, P \subset \mathcal{P}\) such that \(P^n \to P\) weakly as \(n \to \infty\). Then, we have
\begin{align*}
1 \wedge \|\Phi(P^n) - \Phi(P)\| = E^{P^n} \big[ 1 \wedge \|X_0 -  \Phi(P)\|\big] \to E^P \big[1 \wedge \|X_0 - \Phi(P)\|\big] = 0
\end{align*}
as \(n \to \infty\)
due to the definition of convergence in distribution.
We conclude that \(\Phi\) is continuous.
Furthermore, its graph \(G \triangleq \big\{ (P, \Phi(P)) \colon P \in \mathcal{P}\big\}\) is a Borel subset of \(\mathcal{P} \times \mathbb{R}^d\) due to \cite[Proposition 8.1.8]{cohn13}. 
We have \(\mathcal{B}(\mathcal{P} \times \mathbb{R}^d) = \mathcal{B}(\mathcal{P}) \otimes \mathcal{B}(\mathbb{R}^d)\), see \cite[Proposition 8.1.7]{cohn13}, and 
\(
\bigcup_{P \in \mathcal{P}} \big\{x \in \mathbb{R}^d \colon x = \Psi(P)\big\} = \mathbb{R}^d,
\)
by the assumptions of Proposition \ref{prop: ex}.
Thus, by the section theorem \cite[Theorem A.1.8]{Kallenberg} there exists a Borel map \(x \mapsto P_x\) and a \(\eta\)-null set \(N \in \mathcal{B}(\mathbb{R}^d)\) such that \((P_x, x) \in G\) for all \(x \not \in N\). By the definition of \(G\), for all \(x \not \in N\) the coordinate process is a \(P_x\)-semimartingale with local characteristics \((b, c , K; A)\) and initial law \(\delta_x\).
Clearly, the probability measure \(P_\eta \triangleq \int P_x \eta(\dd x)\) satisfies \(P_\eta \circ X^{-1}_0 = \eta\).
Furthermore, for all \(x \not \in N\) we have 
\[
E^{P_x} \big[ \big(Y_t - Y_s\big) \1_F\big] = 0,
\]
for all \(0 \leq s < t < \infty, F \in \mathcal{F}^o_s\) and \(Y \in \mathfrak{X}_\textup{loc}\). Consequently, integrating and using Lemma \ref{lem: mp} yields that the coordinate process is a \(P_\eta\)-semimartingale with local characteristics \((b, c, K; A)\) and initial law \(\eta\).  This completes the proof. \qed

\bibliographystyle{tfs}

\end{document}